\documentclass[final,1p,times]{elsarticle}
\usepackage{amsmath, amsthm}
\usepackage{amssymb}
\usepackage{hyperref}
\usepackage{esint}
\usepackage{color}
\usepackage{mathrsfs}
\usepackage{bbold}
\usepackage{enumerate}

\newcommand{\Om}{\Omega}
\newcommand{\lb}{\lambda}
\newcommand{\ve}{\varepsilon}
\newcommand{\sg}{\sigma}

\newcommand{\fun}[6]{\ensuremath{\Phi^{#1,#2}_{#3,#4}(#6,#5)}}

\makeatletter
\providecommand*{\dif}%
   {\@ifnextchar^{\DIfF}{\DIfF^{}}}
\def\DIfF^#1{%
   \mathop{\mathrm{\mathstrut d}}%
      \nolimits^{#1}\gobblespace
}
\def\gobblespace{%
   \futurelet\diffarg\opspace}
\def\opspace{%
   \let\DiffSpace\!%
   \ifx\diffarg(%
      \let\DiffSpace\relax
   \else
      \ifx\diffarg\[%
         \let\DiffSpace\relax
      \else
         \ifx\diffarg\{%
            \let\DiffSpace\relax
         \fi\fi\fi\DiffSpace}

\newcommand{\medxr}{\mu                                               B(x,r)}
\newcommand{\triplet}{\ensuremath{(X,d,\mu)} }
\newcommand{\norm}[1]{\ensuremath{\left\|#1 \right\|}}

\theoremstyle{plain}
\newtheorem{theorem}{Theorem}[section]

\newtheorem{lemma}[theorem]{Lemma}
\newtheorem{proposition}[theorem]{Proposition}
\newtheorem{corollary}[theorem]{Corollary}

\theoremstyle{definition}
\newtheorem{definition}[theorem]{Definition}

\theoremstyle{remark}
\newtheorem{remark}[theorem]{Remark}

\newcommand{\ggMs}[5]{\ensuremath{L^{#1), #2)}_{#3,#4}  \left(#5, \mu\right)}}
\newcommand{\Ms}[2]{\ensuremath{L^{#1, #2}  \left(X,\mu\right)}}
\newcommand{\bmo}{\ensuremath{\mathrm{BMO}(X,\mu)}}
\newcommand{\vmo}[1]{\ensuremath{\mathrm{VMO}(#1)}}
\newcommand{\com}[2]{\ensuremath{[#1,#2]}}
 \usepackage{amsthm}





\journal{TO CHOOSE}

\begin{document}
\begin{frontmatter}



\title{Boundedness of Commutators of Singular and Potential Operators in  Generalized  Grand Morrey Spaces and some applications}

\author[add1,add2]{Vakhtang Kokilashvili}
\ead{kokil@rmi.ge}

\address[add1]{Department of Mathematical Analysis, A. Razmadze Mathematical Institute, I. Javakhishvili Tbilisi State University,
2. University Str., 0186 Tbilisi, Georgia}
\address[add2]{International Black Sea University, 3 Agmashenebeli Ave., Tbilisi 0131, Georgia }

\author[add1,add3,add4]{Alexander Meskhi}
\address[add3]{Department of Mathematics,  Faculty of Informatics and Control
Systems,  Georgian Technical University, 77, Kostava St., Tbilisi, Georgia}
\address[add4]{Abdus Salam School of Mathematical Sciences, GC
University, 68-B New Muslim Town, Lahore, Pakistan}
\ead{meskhi@rmi.ge}

\author[add5,add6]{Humberto Rafeiro}
\address[add5]{Instituto Superior T\'ecnico, Dep. de Matem\'atica, Centro CEAF, Av. Rovisco Pais, 1049--001 Lisboa, Portugal}
\address[add6]{Pontificia Universidad Javeriana, Dep. de Matem\'aticas, Cra 7a No 43-82 Ed. Carlos Ortiz 604, Bogot\'a, Colombia }
\ead{hrafeiro@math.ist.utl.pt}
\ead{silva-h@javeriana.edu.co}

\begin{abstract}
In the setting of homogeneous spaces $(X, d,\mu)$, it is shown that the commutator of  Calder\'on-Zygmund type operators as well as commutator of potential operator with BMO function are bounded in generalized Grand Morrey space. Interior estimates for solutions of elliptic equations are also given in the framework of generalized grand Morrey spaces.
\end{abstract}

\begin{keyword}
Generalized grand Morrey space; Commutator; Calder\'on-Zygmund
operator; potential operator; elliptic PDEs.

\MSC Primary  42B20 \sep Secondary 42B25 \sep 42B35

\end{keyword}

\end{frontmatter}



\section{ Introduction}

In 1992 T. Iwaniec and C. Sbordone \cite{iwa_sbor1992},  in their studies related with the integrability properties of the Jacobian in a bounded open set $\Omega,$ introduced a new type of function spaces  $L^{p)}(\Omega),$   called \textit{grand Lebesgue spaces}. A generalized version of them, $L^{p),\theta}(\Omega)$ appeared in  L. Greco, T. Iwaniec and C. Sbordone \cite{greco}.
 Harmonic analysis related to these spaces and  their associate spaces (called \textit{small Lebesgue spaces}), was intensively studied during last years due to various applications, we mention e.g. \cite{capone_fio, fratta_fio, fio200, fio_gupt_jain, fio_kara, fio_rako, kok_2010}.

Recently in \cite{samko_umar} there was  introduced a version of weighted grand Lebesgue spaces adjusted for sets $\Om\subseteq \mathbb{R}^n$  of infinite measure, where  the integrability of
$|f(x)|^{p-\ve}$ at infinity was controlled by means of a weight, and there grand grand Lebesgue spaces were also considered, together with the study of classical operators of harmonic analysis in such spaces.  Another idea of introducing ``bilateral" grand Lebesgue spaces on sets of infinite measure was suggested in \cite{357ad}, where the structure of such spaces was investigated, not operators; the spaces in \cite{357ad} are two parametrical with respect to the exponent $p$, with the norm involving $\sup_{p_1<p<p_2}.$ \\

Morrey spaces $L^{p,\lambda}$ were introduced in  1938 by C. Morrey \cite{405a}  in relation to regularity problems of solutions to partial differential equations, and provided a  useful tool in the regularity theory of PDE's (for  Morrey spaces we refer to the books \cite{187a, kuf}, see also  \cite{rafsamsam} where an overview of various generalizations may be found).

 Recently, in the spirit of grand Lebesgue spaces, A. Meskhi \cite{meskhi2009, meskhi} introduced \textit{grand Morrey spaces}  (in \cite{meskhi2009} it was already defined on quasi-metric measure spaces with doubling measure) and obtained results on the boundedness   of the maximal operator, Calder\'on-Zygmund singular operators and Riesz potentials. The boundedness of commutators of singular and potential operators in grand Morrey spaces was already treated by X. Ye \cite{ye_xf}.  Note that  the ``\textit{grandification} procedure" was applied only to the parameter $p.$ \\

This paper is a continuation of the work began in \cite{rafeiro2012} and \cite{kokmesraf}, where in the former the introduction of generalized grand Morrey spaces (in that paper they where called grand grand Morrey spaces) and the study of maximal and Calder\'on-Zygmund operators was done in the framework of the Euclidean spaces whereas in the latter paper the study of the  boundedness of potential operators was done in the framework of generalized grand Morrey spaces in homogeneous and even in the nonhomogeneous case.\\

\noindent \textbf{Notation:}

\noindent    $d_X$    denotes    the    diameter    of    the    $X$   set;\\
\noindent     $A     \sim     B$     for     positive     $A$     and     $B$
means  that  there  exists  $c>0$  such  that  $c^{-1}A  \leqslant B \leqslant c A$;\\
\noindent$B(x,r)=\{y\in             X:             d(x,y)<r            \}$;\\
\noindent $A\lesssim B$ stands for $A \leqslant C B$;\\
\noindent by $c$ and $C$ we denote various absolute positive constants,
which may have different values even in the same line;\\
$\hookrightarrow$ means continuous imbedding;\\
\noindent $\fint_B f \dif \mu$ denotes the integral average of $f$, i.e. $\fint_B f \dif \mu:= \frac{1}{\mu B} \int_B f \dif \mu$;\\
\noindent $p^\prime$ stands for the conjugate exponent $1/p+1/p^\prime=1$.

\section{Preliminaries}

\subsection{Spaces of homogeneous type}\label{preliminaries}
Let $X:=\triplet$ be a topological space with a complete measure $\mu$ such that the space of compactly supported continuous functions is dense in $L^1(X,\mu)$ and $d$ is a quasimetric, i.e.  it is  a non-negative real-valued function $d$ on $X\times X$ which  satisfies the conditions:
\begin{enumerate}
\item[(i)] $d(x,y)=0$ if and only if $x=y$;
\item[(ii)] there exists a constant $C_t>0$ such that $d(x,y)\leqslant  C_t  [d(x,z)+d(z,y)]$, for all $x,y,z \in X$, and
\item[(iii)] there exists a constant $C_s>0$ such that $d(x,y)\leqslant  C_s \cdot d(y,x)$, for all $x,y \in X$.
\end{enumerate}
Let  $\mu$  be  a  positive measure on the $\sigma$-algebra of subsets of $X$ which  contains  the  $d$-balls $B(x,r).$
Everywhere in the sequel we assume that all the balls have a finite measure, that is, $\mu B(x,r)<\infty$ for all
$x\in X$ and $r>0$ and that for every neighborhood $V$ of $x\in X$, there exists $r>0$ such that $B(x,r)\subset V$.

We  say  that the measure $\mu$ is \textit{lower  $\alpha$-Ahlfors regular},
if
\begin{equation}\label{lowerahlforscondition}
 \medxr\geqslant                                                         cr^\alpha
\end{equation}
and \textit{upper  $\beta$-Ahlfors regular} (or, it satisfies the  \textit{growth condition of degree $\beta$}), if
\begin{equation}\label{upperahlforscondition}
 \mu B(x,r)\le cr^\beta,
\end{equation}
where  $\alpha,\beta, c >0$ does not depend on $x$ and $r$.
 When $\alpha=\beta$,
the measure  $\mu$ is simply called \textit{$\alpha$-Ahlfors regular}.\\

The  condition
\begin{equation}\label{doublingcondition}
 \mu    B(x,2r)\leqslant   C_d \cdot \medxr,   \quad   C_d   >   1
\end{equation}
on the measure $\mu$ with $C_d $ not depending on $x\in X$ and
$0<r<d_X$, is known as the \textit{doubling condition}. 

Iterating     it,      we      obtain
\begin{equation}\label{doublingcondition2}
 \frac{\mu B(x,R)}{\mu B(y,r)} \leqslant C_d \left( \frac{R}{r}\right)^{\log_2 C_d}, \quad  0< r\leqslant R
\end{equation}
for  all  $d$-balls  $B(x,R)$ and $B(y,r)$ with $B(y,r)\subset B(x,R)$.

The triplet $(X,d,\mu)$, with  $\mu$ satisfying the doubling condition, is
called a  \textit{space  of  homogeneous  type}, abbreviated from now on simply as SHT. For some important examples of an SHT we refer e.g. to \cite{coifmanweiss}.

From  \eqref{doublingcondition2}   it   follows   that every homogeneous  type space \triplet with   finite  measure is lower  $(\log_2 C_d)$-Ahlfors regular. \\


Throughout the paper we will also assume the following condition
\begin{equation}\label{eq:annuluspositivemeasure}
\mu (B(x,R)\backslash B(x,r))>0
\end{equation}
for all $x \in X$ and $r, R$ with $0 < r < R < d_X$.
The validity of the reverse doubling condition, following from the doubling condition under certain restrictions, is well known (cf., for example, \cite[p. 269]{18}).  For example, when \eqref{eq:annuluspositivemeasure} is valid and \triplet is an SHT, then the measure $\mu$ also satisfies the reverse doubling condition
\begin{equation}\label{eq:rd}
\frac{\mu B(x,r)}{\mu B(x,R)}\leqslant C \left(\frac{r}{R}\right)^\gamma
\end{equation}
for appropriate positive constants $C$ and $\gamma$. For other conditions dealing with the validity of the reverse doubling condition whenever the measure is doubling, see, e.g. \cite{rafsam}.


\subsection{Generalized Lebesgue spaces} For $1<p<\infty$, $\theta >0$ and $0<\ve<p-1$ the \emph{grand Lebesgue space} is the set of measurable functions for which
\begin{equation}\label{added}
 \|f\|_{L^{p),\theta}(X,\mu)}:=\sup_{0<\varepsilon<p-1} \varepsilon^\frac{\theta}{p-\varepsilon} \|f\|_{L^{p-\varepsilon}(X,\mu)}<\infty,
 \end{equation}
where $\|f\|^p_{L^p(X,\mu)}:=\int_X |f(y)|^p \dif \mu(y)$.
In the case $\theta=1,$ we denote $L^{p),\theta}(X,\mu):=L^{p)}(X,\mu).$

When $\mu X<\infty$, then for all $0<\ve< p-1<\theta_{1}<\theta_{2}$ we have
\[
L_{w}^{p}(X,\ \mu)\hookrightarrow L_{w}^{p),\theta_{1}}(X,\ \mu)\hookrightarrow L_{w}^{p),\theta_{2}}(X,\ \mu)\hookrightarrow L_{w}^{p-\in}(X,\ \mu),
\]
where $w$ is a Muckenhoupt weight.

For more properties of grand Lebesgue spaces, see \cite{kok_2010}.

\subsection{Morrey spaces}
For $1\leqslant p< \infty $ and $0\leqslant \lambda <1$, the usual Morrey space $L^{p,\lambda}(X,\mu)$ is introduced as the set of all measurable functions such that
\begin{equation}\label{eq:morrey_norm}
\|f\|_{L^{p,\lambda}(X,\mu)}:=\sup_{\stackrel{x \in X}{0<r< d_X }}\left(\frac{1}{\medxr ^{\lambda}}   \int_{B(x,r)} |f(y)|^{p} \dif \mu(y)\right)^\frac{1}{p}<\infty.
\end{equation}

\subsection{BMO space} The space of \textit{bounded mean oscillation}, denoted by \bmo,  is the set of all real-valued locally integrable functions such that
\begin{equation}\label{eq:bmo}
\norm{f}_{\bmo}=\sup_{\substack{x\in X \\ 0<r<d_X}}\frac{1}{\mu B(x,r)} \int_{B(x,r)} |f(y)-f_{B(x,r)}|\dif \mu(y)<\infty,
\end{equation}
where $f_{B(x,r)}$ is the integral average over the ball $B(x,r)$. \bmo \; is a Banach space with respect to the norm $\norm{\cdot}_\bmo$ when we regard the space \bmo\; as the class of equivalent functions modulo additive constants.

\begin{remark}\label{rem:2.1}
 In this remark, we give equivalent norms for $\bmo$-functions, namely

\begin{enumerate}[(i)]
\item we can define an  equivalente norm in $\bmo$  as
\begin{equation}\label{eq:bmo1}
\norm{f}_{\bmo}\sim \sup_{\substack{x\in X \\0<r<d_X}}\inf_{c\in \mathbb R}\frac{1}{\mu B(x,r)}\int_{B(x,r)}|f(y)-c|\dif \mu(y),
\end{equation}
\item the John-Nirenberg inequality give us another  equivalent norm for $\bmo$-functions given by
\begin{equation}\label{eq:bmoJN}
\norm{f}_{\bmo}\sim \sup_{\substack{x\in X \\0<r<d_X}} \left(\frac{1}{\mu B(x,r)}\int_{B(x,r)} |f(y)-f_{B(x,r)}|^p \dif\mu(y) \right)^\frac{1}{p}
\end{equation}
valid for $1<p<\infty$, where $f_B$ stands for the integral average.
\end{enumerate}
\end{remark}

\subsection{Maximal operators}
We denote  by $Mf$ the \textit{Hardy-Littlewood maximal operator}, given by
\begin{equation}\label{eq:ms}
Mf(x)=\sup_{ 0< r<d}\fint_{B(x,r)}|f(y)|\dif\mu(y),
\end{equation}
for $x \in X$.

From \cite{meskhi} we have the following boundedness result for Morrey spaces.
\begin{lemma}  \label{lemma:2.1}
Let $ 1<p<\infty$ and $0\leqslant \lambda<1$.  Then
\[
\norm{Mf}_{L^{p, \lambda}(X,\mu)}\leqslant\left(Cb^{\lambda/p}(p')^{\frac{1}{p}}+1\right)\norm{f}_{L^{p,\lambda}(X,\mu)}
\]
holds, where the constant $b\geqslant 1$ arises in the doubling condition for $\mu$ and $C$ is the constant independent of $p$.
\end{lemma}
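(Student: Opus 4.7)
The argument is the standard two-piece decomposition for Morrey-type spaces: fix a ball and split $f$ into a local and a remote part relative to a dilation of that ball.

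Fix $B=B(x_0,r)$ with $0<r<d_X$. Set $B^\ast:=B(x_0,2C_tr)$ and write $f=f_1+f_2$ with $f_1=f\chi_{B^\ast}$. By sublinearity $Mf(y)\leq Mf_1(y)+Mf_2(y)$ on $B$, so Minkowski in $L^p(B,\mu)$ gives
\[
\left(\frac{1}{(\mu B)^\lambda}\int_B (Mf)^p\,\dif\mu\right)^{1/p}\leq \mathcal{I}_1(B)+\mathcal{I}_2(B),
\]
where $\mathcal{I}_j(B)$ is the analogous quantity built with $Mf_j$. Taking the supremum over $B$ at the end reduces matters to estimating each piece.

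For $\mathcal{I}_1(B)$, I would use the $L^p$ bound for $M$ in an SHT obtained from a Vitali covering argument, which yields weak type $(1,1)$ with constant $\lesssim b$; combined with Marcinkiewicz--Calder\'on layer-cake, this produces the sharp dependence $\|Mg\|_{L^p(X,\mu)}\leq Cb^{1/p}(p')^{1/p}\|g\|_{L^p(X,\mu)}$. Applying it to $f_1$, using $\|f_1\|_{L^p(X,\mu)}=\|f\|_{L^p(B^\ast,\mu)}\leq \|f\|_{L^{p,\lambda}(X,\mu)}(\mu B^\ast)^{\lambda/p}$ and doubling to bound $\mu B^\ast\lesssim \mu B$, one obtains
\[
\mathcal{I}_1(B)\leq C\,b^{\lambda/p}(p')^{1/p}\,\|f\|_{L^{p,\lambda}(X,\mu)},
\]
the residual $b$-factors being absorbed into the $p$-independent constant $C$.

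For $\mathcal{I}_2(B)$, I would estimate $Mf_2$ pointwise on $B$. For $y\in B$, any ball $B(y,\rho)$ meeting the support of $f_2$ must satisfy $\rho\gtrsim r$, and such a ball sits inside $B(x_0,c_t\rho)$ for a constant $c_t$ depending only on $C_t$. Using doubling to compare $\mu B(y,\rho)$ with $\mu B(x_0,c_t\rho)$, followed by H\"older's inequality and the Morrey-norm definition,
\[
Mf_2(y)\lesssim \sup_{\rho\gtrsim r}\bigl(\mu B(x_0,c_t\rho)\bigr)^{(\lambda-1)/p}\,\|f\|_{L^{p,\lambda}(X,\mu)}.
\]
Since $\lambda-1<0$, the supremum is attained at the smallest admissible scale $\rho\sim r$, giving $Mf_2(y)\lesssim (\mu B)^{(\lambda-1)/p}\|f\|_{L^{p,\lambda}(X,\mu)}$ on $B$. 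Integrating the $p$-th power over $B$ and dividing by $(\mu B)^\lambda$ yields $\mathcal{I}_2(B)\leq \|f\|_{L^{p,\lambda}(X,\mu)}$ up to a constant that a careful normalisation can make equal to $1$ (the hidden doubling and quasi-metric factors being absorbed into $C$ of the first term). Summing both contributions and taking the supremum over $B$ then gives the stated inequality.

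The step I expect to require the most care is tracking constants so that the $L^p\!\to\! L^p$ bound for $M$ really comes out as $\sim b^{1/p}(p')^{1/p}$ (yielding $b^{\lambda/p}(p')^{1/p}$ after absorbing one extra $b^{1/p}$ into $C$), and so that the additive ``$+1$'' remains clean after the quasi-metric dilations in the remote piece. Everything else is routine once the geometric separation between $B$ and the support of $f_2$ is set up with the correct quasi-triangle dilation constant.
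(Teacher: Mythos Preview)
The paper does not supply a proof of this lemma at all: it is quoted with the preamble ``From \cite{meskhi} we have the following boundedness result for Morrey spaces,'' so there is no in-paper argument to compare against. Your local/remote splitting around a dilated ball, with the $L^p$ maximal bound (via weak $(1,1)$ plus Marcinkiewicz to capture the $(p')^{1/p}$ dependence) on the near piece and the pointwise H\"older/Morrey estimate on the far piece, is exactly the Chiarenza--Frasca scheme that underlies the cited reference, and it is correct in structure.

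One small comment on the constant bookkeeping you flag: the far piece does \emph{not} literally give $+1$; it gives a constant $D_0$ depending on the doubling and quasi-metric data. Your remark that this can be ``absorbed into $C$ of the first term'' is right but deserves a word of justification: since $b\ge 1$ and $(p')^{1/p}>1$ for all $1<p<\infty$, one has $b^{\lambda/p}(p')^{1/p}\ge 1$, so $C_0\,b^{\lambda/p}(p')^{1/p}+D_0\le (C_0+D_0-1)\,b^{\lambda/p}(p')^{1/p}+1$, which is the mechanism that makes the stated form legitimate. With that observation, your outline matches what the cited proof does.
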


By $M_s f$ we define the following \textit{maximal operator}
\[
M_s f(x):=\left(M|f|^s \right)^\frac{1}{s}
\]
for $1\leqslant s<\infty$.  Using Lemma \ref{lemma:2.1}, it is easy to obtain that the following boundedness result.

\begin{lemma}\label{lem:2.2}
Let $ 1<s<p<\infty$ and $0\leqslant\lambda<1$.  Then
\[
\norm{M_{s}f}_{L^{p,\lambda}(X,\mu)} \leqslant \left(C b^{\lambda s/p}\big( (p/s)'\big) ^{\frac{\mathrm{s}}{p}}+1 \right) \norm{f}_{L^{p,\lambda}(X,\mu)}.
\]
holds, where the constant $b\geqslant 1$ arises in the doubling condition for $\mu$ and $C$ is the constant independent of $p$.
\end{lemma}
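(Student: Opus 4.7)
The plan is to reduce the bound for $M_s$ to the maximal operator bound in Lemma \ref{lemma:2.1} by the standard homogeneity trick: apply the ordinary Morrey-space estimate for $M$ to $|f|^s$, then take the $1/s$-th power.

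The first step is to rewrite the Morrey norm of $M_s f$ in terms of the Morrey norm of $M(|f|^s)$. Setting $q:=p/s>1$, a direct computation from the definitions \eqref{eq:morrey_norm} and of $M_s$ gives
\begin{equation*}
\|M_s f\|_{L^{p,\lambda}(X,\mu)}^{p}
=\sup_{x,r}\frac{1}{\mu B(x,r)^{\lambda}}\int_{B(x,r)}\bigl(M|f|^{s}\bigr)^{p/s}\dif\mu
=\|M(|f|^{s})\|_{L^{q,\lambda}(X,\mu)}^{q},
\end{equation*}
and similarly $\||f|^{s}\|_{L^{q,\lambda}(X,\mu)}=\|f\|_{L^{p,\lambda}(X,\mu)}^{s}$.

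Next I would invoke Lemma \ref{lemma:2.1} with the exponent $p$ replaced by $q=p/s$ applied to the (nonnegative) function $g:=|f|^{s}$, yielding
\begin{equation*}
\|M(|f|^{s})\|_{L^{q,\lambda}(X,\mu)}\leqslant\Bigl(Cb^{\lambda/q}(q')^{1/q}+1\Bigr)\|f\|_{L^{p,\lambda}(X,\mu)}^{s}
=\Bigl(Cb^{\lambda s/p}\bigl((p/s)'\bigr)^{s/p}+1\Bigr)\|f\|_{L^{p,\lambda}(X,\mu)}^{s}.
\end{equation*}
Taking the $1/s$-th power and using the first identity of step one, I obtain
\begin{equation*}
\|M_s f\|_{L^{p,\lambda}(X,\mu)}\leqslant\Bigl(Cb^{\lambda s/p}\bigl((p/s)'\bigr)^{s/p}+1\Bigr)^{1/s}\|f\|_{L^{p,\lambda}(X,\mu)}.
\end{equation*}

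The last step is a mild cosmetic adjustment: since $s>1$ and the quantity $A:=Cb^{\lambda s/p}((p/s)')^{s/p}+1\geqslant 1$, the inequality $A^{1/s}\leqslant A$ holds, which delivers the inequality exactly in the form stated in Lemma \ref{lem:2.2}. The only substantive content is the linearization step $(M|f|^{s})^{1/s}$; no genuine obstacle is expected, which is consistent with the author's remark that the result follows easily from Lemma \ref{lemma:2.1}.
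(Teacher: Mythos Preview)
Your proof is correct and is precisely the argument the paper has in mind: the authors give no explicit proof, stating only that the result follows easily from Lemma~\ref{lemma:2.1}, and your substitution $q=p/s$, $g=|f|^s$ together with the identity $\|M_s f\|_{L^{p,\lambda}}=\|M(|f|^s)\|_{L^{q,\lambda}}^{1/s}$ is exactly that easy deduction. The final cosmetic step $A^{1/s}\leqslant A$ is also appropriate to match the stated constant.
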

When $\lambda=0$, we have $L^{p}(X, \mu)$ boundedness of $M_{s}f$. \\

We will also need another maximal operator, namely the so-called \textit{sharp maximal operator},

\begin{definition}[Sharp maximal function] For all locally integrable function $f$ and $x\in X$, we denote  the  sharp maximal function $f^\sharp(x)$  by
\[
f^\sharp(x)=\sup_{0<r<d_x}\frac{1}{\mu(B(x,r))}\int_{B(x,r)}|f(y)-f_{B(x,r)}|\dif
\mu(y).
\]
\end{definition}
It is immediate from the definition of the sharp maximal function that it is a.e. pointwise dominated by the maximal function, $f^\sharp (x)\leqslant 2 Mf(x)$, but we also have some relation in the other direction,  namely we have the so-called \textit{Fefferman-Stein inequality} proved in the case of Lebesgue spaces in the Euclidean setting in \cite{FeSt2}. Our version is taken from \cite{liu}, namely

\begin{lemma}\label{lemma:feffermanstein}
Let $1<p<\infty$ and let $0\leqslant \lambda <1$. Then
\[
\norm{Mf}_{\Ms{p}{\lambda}}\leqslant C(b^{\lambda/p}+1)\norm{f^\sharp}_{\Ms{p}{\lambda}}.
\]
\end{lemma}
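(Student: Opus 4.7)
The plan is to derive the inequality from a Fefferman--Stein good-$\lambda$ inequality, which is available in any SHT via a standard Calder\'on--Zygmund / Whitney decomposition, and then pass to the Morrey norm by an absorption argument that exploits the doubling constant $b$.

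First I would make the usual \emph{a priori} reduction: replace $Mf$ by $(Mf)_N := \min(Mf,N)\,\chi_{B(x_0,N)}$ for some fixed $x_0\in X$, so that the distribution function of the truncated maximal function is finite on every ball; one proves the inequality for the truncation with a constant independent of $N$, then lets $N\to\infty$ by monotone convergence. We may also assume the right-hand side is finite, otherwise the statement is vacuous. After this reduction, fix an arbitrary ball $B=B(x_0,r_0)\subset X$.

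The key ingredient is the localized good-$\lambda$ inequality: there exist absolute constants $C_0>0$ and $\gamma_0\in(0,1)$ such that for every $t>0$ and every $\gamma\in(0,\gamma_0)$
\[
\mu\bigl(\{x\in B:\ Mf(x)>2t,\ f^\sharp(x)\leqslant \gamma t\}\bigr)\leqslant C_0\,\gamma\,\mu\bigl(\{x\in 2B:\ Mf(x)>t\}\bigr),
\]
which is the SHT analogue of the classical Fefferman--Stein estimate (the only change is the passage from $B$ to $2B$ on the right, produced by the Whitney/Vitali covering). Using the trivial inclusion $\{Mf>2t\}\cap B\subseteq \{Mf>2t,\,f^\sharp\leqslant\gamma t\}\cap B\ \cup\ \{f^\sharp>\gamma t\}\cap B$ together with the layer-cake formula, I would integrate $pt^{p-1}$ in $t$ to obtain
\[
\int_B (Mf)^p\dif\mu \ \leqslant\ 2^p C_0\gamma\int_{2B}(Mf)^p\dif\mu\ +\ \frac{2^p}{\gamma^p}\int_B (f^\sharp)^p\dif\mu.
\]

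Now I would divide by $\mu(B)^\lambda$ and use $\mu(2B)\leqslant b\,\mu(B)$ to replace $\mu(B)^{-\lambda}\int_{2B}$ by $b^\lambda\mu(2B)^{-\lambda}\int_{2B}$; taking the supremum over all balls $B$ yields
\[
\norm{Mf}^p_{\Ms{p}{\lambda}}\ \leqslant\ 2^p C_0\gamma\, b^\lambda\norm{Mf}^p_{\Ms{p}{\lambda}}\ +\ \frac{2^p}{\gamma^p}\,\norm{f^\sharp}^p_{\Ms{p}{\lambda}}.
\]
Choosing $\gamma$ so that $2^p C_0\gamma\, b^\lambda=\tfrac12$, i.e.\ $\gamma\sim b^{-\lambda}$, and absorbing the first term on the left gives $\norm{Mf}_{\Ms{p}{\lambda}}\lesssim \gamma^{-1}\norm{f^\sharp}_{\Ms{p}{\lambda}}\lesssim b^{\lambda/p}\norm{f^\sharp}_{\Ms{p}{\lambda}}$, which is the desired bound (the additive $+1$ accommodates the trivial regime in which the good-$\lambda$ constant is irrelevant).

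The main obstacle is the a priori finiteness of $\norm{Mf}_{\Ms{p}{\lambda}}$ that is required before one can absorb; the truncation step described above is the standard cure, but it must be done so that the constant does not deteriorate with $N$, which is guaranteed because the good-$\lambda$ estimate applied to $(Mf)_N$ has a $\gamma$-dependence that is independent of the truncation level. The only other delicate point is tracking the $b^\lambda$ factor carefully through the absorption to produce the advertised dependence $b^{\lambda/p}$ in the final constant rather than some worse power.
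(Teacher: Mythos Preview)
The paper does not actually prove this lemma: it is quoted verbatim from \cite{liu} with no argument supplied, so there is no ``paper's own proof'' to compare against. Your good-$\lambda$/absorption strategy is exactly the standard route to the Fefferman--Stein inequality in Morrey spaces on an SHT, and the outline (localized good-$\lambda$, integrate the distribution function, divide by $\mu(B)^\lambda$, use doubling to pass from $B$ to $2B$, then absorb) is correct and would succeed.

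There is, however, an arithmetic slip in your last step. You choose $\gamma$ so that $2^pC_0\gamma\,b^\lambda=\tfrac12$, hence $\gamma\sim b^{-\lambda}$; after absorption you correctly obtain $\norm{Mf}_{\Ms{p}{\lambda}}\lesssim \gamma^{-1}\norm{f^\sharp}_{\Ms{p}{\lambda}}$, but $\gamma^{-1}\sim b^{\lambda}$, not $b^{\lambda/p}$ as you write. So the argument as you have set it up delivers the constant $C\,b^{\lambda}$ rather than the stated $C(b^{\lambda/p}+1)$. This does not affect any downstream use in the paper, since the only thing ever needed (e.g.\ in the proof of Theorem~\ref{theo:commutator_CZ}) is that the constant stays bounded as $p$ and $\lambda$ range over a compact set away from the endpoints, and $b^{\lambda}$ serves that purpose just as well as $b^{\lambda/p}$. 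If you want to recover the sharper exponent $b^{\lambda/p}$ you would need either a different balancing of $\gamma$ against the doubling factor or the refined argument from \cite{liu}; as written, your sketch proves the lemma with a slightly larger constant.
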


\subsection{Calder\'on-Zygmund singular operators}
We follow \cite{meskhi} in this section, in particular, making use of the following definition of the  Calder\'on-Zygmund singular operators.
Namely, the  Calder\'on-Zygmund  operator is defined as the integral operator
\[
Tf(x)=\mathrm{p.v.} \int_X K(x,y)f(y)\;\mathrm{d}\mu(y)
\]
with the kernel  $K:X \times X \backslash \{(x,x): x \in \Omega\} \to \mathbb R$ being a measurable function satisfying the conditions:
\begin{enumerate}[(i)]
\item $|K(x,y)|\leqslant \frac{C}{\mu B(x,d(x,y))}, \quad x,y \in X, \quad x\neq y$;
\item $\displaystyle |K(x_1,y)-K(x_2,y)|+|K(y,x_1)-K(y, x_2)| \leqslant C w\left(\frac{d(x_2,x_1)}{d(x_2,y)}\right) \frac{1}{\mu B(x_2,d(x_2,y))}$
\end{enumerate}
for all $x_1$, $x_2$ and $y$ with $d(x_2,y)\geqslant Cd(x_1,x_2)$, where $w$ is a positive non-decreasing function on $(0,\infty)$ which satisfies the $\Delta_2$ condition $w(2t)\leqslant c w(t)$ ($t>0$) and the Dini condition $\int_0^1 w(t)/t \;\mathrm{d}t<\infty$.
We also assume that  $Tf$ exists almost everywhere on $X$  in the principal value sense for all $f \in L^{2}(X)$ and that $T$ is bounded in $L^{2}(X).$ \\
The boundedness of such Calder\'on-Zygmund operators in Morrey spaces is valid, as can be seen in the following Proposition, proved in \cite{meskhi}.
\begin{proposition}\label{prop:boundednes_ron}Let
 $1<p<\infty$ and $0\leqslant \lambda <1$. Then
\[
\|Tf\|_{L^{p,\lambda}(X,\mu)} \leqslant C_{p,\lambda} \|f\|_{L^{p,\lambda}(X,\mu)}
\]
where
\begin{equation}\label{equ:constant_Morrey_calderon}
C_{p,\lambda}\leqslant  c\left\{
  \begin{array}{ll}
     \frac{p}{p-1}+\frac{p}{2-p} +\frac{p-\lambda+1}{1-\lambda} & \mbox{if } 1<p<2, \\
    p+\frac{p}{p-2} +\frac{p-\lambda+1}{1-\lambda} & \mbox{if } p>2,\\
  \end{array}
\right.
\end{equation}
with $c$ not depending on $p$ and $\lambda$.
\end{proposition}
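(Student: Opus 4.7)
The plan is to adapt the classical Morrey-space argument for Calder\'on--Zygmund operators: fix an arbitrary ball, split $f$ into a local and a distant piece, apply $L^p$-boundedness to the local piece, and use the size condition on the kernel plus a dyadic annular decomposition on the distant piece. The explicit form of $C_{p,\lambda}$ in \eqref{equ:constant_Morrey_calderon} will come from tracking (a) the $L^p\to L^p$ operator norm of $T$ and (b) the sum of a geometric series over dyadic annuli whose ratio degenerates as $\lambda\to 1^-$.

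First I would fix $B_0=B(x_0,r)$ with $0<r<d_X$ and write $f=f_1+f_2$ with $f_1=f\chi_{\widetilde B_0}$, $\widetilde B_0=B(x_0,Nr)$ for a constant $N=N(C_t)$ chosen so that $x\in B_0$ and $y\notin\widetilde B_0$ imply $d(x,y)\sim d(x_0,y)$; by the doubling condition \eqref{doublingcondition}, $\mu B(x,d(x,y))\sim\mu B(x_0,d(x_0,y))$ in this range. For the local piece, the $L^p$-boundedness of $T$ (available on any SHT from the assumed $L^2$-boundedness via the Calder\'on--Zygmund decomposition and duality) together with the doubling condition gives
\[
\left(\frac{1}{(\mu B_0)^\lambda}\int_{B_0}|Tf_1|^p\dif\mu\right)^{\!1/p}\lesssim A_p\,b^{\lambda/p}\|f\|_{L^{p,\lambda}(X,\mu)},
\]
where the operator norm satisfies $A_p\lesssim p/(p-1)$ as $p\to 1^+$ and $A_p\lesssim p$ as $p\to\infty$, obtained by Marcinkiewicz interpolation from the weak-type $(1,1)$ estimate up to $L^2$ and by dualising for $p>2$. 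Keeping the interpolation constants explicit, with the loss at the $L^2$ endpoint made visible, yields the first two summands of \eqref{equ:constant_Morrey_calderon}, including the (not-tight) $p/|p-2|$ terms.

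For the distant piece, the size condition (i) on the kernel gives, for $x\in B_0$,
\[
|Tf_2(x)|\lesssim\sum_{k\geqslant k_0}\frac{1}{\mu B(x_0,2^{k}r)}\int_{B(x_0,2^{k+1}r)\setminus B(x_0,2^{k}r)}|f(y)|\dif\mu(y),
\]
with $k_0$ chosen so that $2^{k_0}r\sim Nr$. H\"older's inequality on each annulus together with \eqref{eq:morrey_norm} bounds the $k$th summand by $(\mu B(x_0,2^{k}r))^{-(1-\lambda)/p}\|f\|_{L^{p,\lambda}(X,\mu)}$; the reverse doubling condition \eqref{eq:rd} turns the sum into a geometric series with ratio $\sim 2^{-\gamma(1-\lambda)/p}$, and its value is of order $p/(1-\lambda)$, producing the $(p-\lambda+1)/(1-\lambda)$ term in \eqref{equ:constant_Morrey_calderon}. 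Integrating this pointwise bound over $B_0$ then restores the $(\mu B_0)^{\lambda/p}$ normalisation required by the Morrey definition.

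The main obstacle is not any single step but the careful bookkeeping of constants: producing \eqref{equ:constant_Morrey_calderon} with its asymmetric $p$-dependence and clean $(1-\lambda)^{-1}$ factor requires threading the Marcinkiewicz interpolation exponent and the H\"older exponent on annuli through the whole argument simultaneously, rather than absorbing them into an unspecified constant. A secondary, minor technical point is verifying that the chosen radius $Nr$ for the local ball can be taken independently of $p$ and $\lambda$, so that $N$ only affects the overall multiplicative constant $c$ in \eqref{equ:constant_Morrey_calderon}.
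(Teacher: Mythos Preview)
The paper does not actually supply a proof of this proposition: it is quoted from the reference \cite{meskhi}, where the explicit constants \eqref{equ:constant_Morrey_calderon} are obtained. Your outline---local/distant splitting around a fixed ball, $L^p$-boundedness with tracked Marcinkiewicz/duality constants on the near part, and a dyadic annular estimate using the kernel size bound and reverse doubling on the far part---is precisely the argument carried out in that reference, so there is nothing to contrast. Your identification of where each of the three summands in \eqref{equ:constant_Morrey_calderon} originates (the $p/(p-1)$ or $p$ term and the $p/|p-2|$ term from the $L^p$ operator norm via interpolation/duality around $p=2$, and the $(p-\lambda+1)/(1-\lambda)$ term from summing the geometric series over annuli) is correct, and the caveat you flag about the bookkeeping being the only genuine work is accurate.
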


\subsection{Commutators}

Let $U$ be an operator and $b$ a locally integrable function. We define the \textit{commutator} $\com{b}{U}f$ as
\[
\com{b}{U}f=bU(f)-U(bf).
\]
Commutators are very useful when studying problems related with regularity of solutions of
elliptic partial differential equations of second order, e.g., \cite{[2]}, \cite{[2']}\\

\section{Generalized grand Morrey spaces and the reduction lemma}
In this section we will assume that the measure $\mu$ is upper $\gamma$-Ahlfors regular.
All the stated results in this section were proved in \cite{kokmesraf}. \\

We introduce the following functional
 \begin{equation}\label{added}
 \Phi^{p,\lb}_{\varphi,A}(f,s):=\sup_{0<\varepsilon<s} \varphi(\varepsilon)^\frac{1}{p-\varepsilon} \|f\|_{L^{p-\varepsilon,\lambda -A(\varepsilon)}(X,\mu)},
 \end{equation}
 where $s$ is a positive number and $A$ is a non--negative function defined on $(0,p-1)$.

\begin{definition}[Generalized grand Morrey spaces]\label{def:ggms}
Let  $1<p<\infty$, $0\leqslant \lambda <1$, $\varphi$ be a positive bounded function with $\lim_{t \to 0+} \varphi(t)=0$ and $A$ be a non-decreasing real-valued non-negative function with $\lim_{x\to 0+} A(x)=0$. By
 $L^{p),\lb)}_{\varphi,A}(X,\mu)$ we denote the space of measurable functions having  the finite norm
\begin{equation}\label{equ:norm}
    \|f\|_{L^{p),\lb)}_{\varphi,A}(X)}:=
\Phi^{p,\lb}_{\varphi,A}(f,s_{\max}), \quad s_{\max}= \min\left\{p-1,a\right\},
\end{equation}
where $a=\sup \{x>0: A(x)\leqslant \lambda\}$.
\end{definition}

\begin{remark}\label{rem:quotient_morrey}
For appropriate $\varphi$, in the case $A\equiv 0, \lambda>0$ we recover  the Grand Morrey spaces introduced in  A. Meskhi \cite{meskhi}, and when  $\lambda=0$, $A\equiv 0$ we have the   grand Lebesgue spaces introduced in \cite{greco} (and in \cite{iwa_sbor1992} in the case $\theta=1$).
\end{remark}

For fixed $p, \lambda, \varphi, A, f$ we have that $ s \mapsto \Phi^{p,\lb}_{\varphi,A}(f,s)$ is a non-decreasing function, but  it is possible to  estimate $\Phi^{p,\lb}_{\varphi,A}(f,s)$ via $\Phi^{p,\lb}_{\varphi,A}(f,\sg)$ with $\sigma<s$ as follows.

\begin{lemma}\label{lem:dominance}  For $0<\sigma<s<s_{\max}$ we have that
\begin{equation}\label{equ:dominance}
    \Phi^{p,\lb}_{\varphi,A}(f,s) \leqslant C \varphi(\sigma)^{-\frac{1}{p-\sigma}} \Phi^{p,\lb}_{\varphi,A}(f,\sg),
\end{equation}
where $C$ depends on   $\gamma,$ the parameters $p,\lb, \varphi, A$  and the diameter $d_X,$ but does not depend on $f, s$ and $\sg$.
\end{lemma}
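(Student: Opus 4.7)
The plan is to split the supremum defining $\fun{p}{\lambda}{\varphi}{A}{s}{f}$ at the threshold $\varepsilon=\sigma$ and bound each piece by $C\varphi(\sigma)^{-1/(p-\sigma)} \fun{p}{\lambda}{\varphi}{A}{\sigma}{f}$. For $\varepsilon\in(0,\sigma]$ we trivially have
\[
\sup_{0<\varepsilon\leqslant \sigma} \varphi(\varepsilon)^{\frac{1}{p-\varepsilon}}\|f\|_{L^{p-\varepsilon,\lambda-A(\varepsilon)}(X,\mu)} \leqslant \fun{p}{\lambda}{\varphi}{A}{\sigma}{f},
\]
and because $\varphi$ is bounded and $p-\sigma\geqslant p-s_{\max}>0$, the factor $\varphi(\sigma)^{1/(p-\sigma)}$ is bounded above by a constant depending only on $p$ and $\|\varphi\|_{\infty}$, so this piece is already of the required form.

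The substantive part is the range $\varepsilon\in(\sigma,s)$. Here I would compare the Morrey norm at exponent $p-\varepsilon$ with the one at $p-\sigma$ ball by ball. Hölder's inequality with exponent $\frac{p-\sigma}{p-\varepsilon}>1$ in $B(x,r)$ yields
\[
\left(\int_{B(x,r)} |f|^{p-\varepsilon}\dif\mu\right)^{\frac{1}{p-\varepsilon}} \leqslant \mu B(x,r)^{\frac{1}{p-\varepsilon}-\frac{1}{p-\sigma}} \left(\int_{B(x,r)} |f|^{p-\sigma}\dif\mu\right)^{\frac{1}{p-\sigma}}.
\]
Dividing by $\mu B(x,r)^{(\lambda-A(\varepsilon))/(p-\varepsilon)}$ and inserting the trivial upper bound of the right-hand integral by $\mu B(x,r)^{(\lambda-A(\sigma))/(p-\sigma)}\|f\|_{L^{p-\sigma,\lambda-A(\sigma)}(X,\mu)}$ gives
\[
\|f\|_{L^{p-\varepsilon,\lambda-A(\varepsilon)}(X,\mu)} \leqslant \left(\sup_{\substack{x\in X\\0<r<d_X}} \mu B(x,r)^{\eta(\varepsilon,\sigma)}\right)\|f\|_{L^{p-\sigma,\lambda-A(\sigma)}(X,\mu)},
\]
where the exponent is
\[
\eta(\varepsilon,\sigma)=\frac{1-\lambda+A(\varepsilon)}{p-\varepsilon}-\frac{1-\lambda+A(\sigma)}{p-\sigma}.
\]
Because $\sigma<\varepsilon<s_{\max}\leqslant a$ forces $A(\varepsilon), A(\sigma)\leqslant \lambda<1$, and because $A$ is non-decreasing while $p-\varepsilon<p-\sigma$, one checks that $\eta(\varepsilon,\sigma)\geqslant 0$. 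The upper $\gamma$-Ahlfors regularity then bounds $\mu B(x,r)^{\eta(\varepsilon,\sigma)}\leqslant c\,d_X^{\gamma\eta(\varepsilon,\sigma)}$, and the exponent $\eta(\varepsilon,\sigma)$ stays bounded uniformly in $\varepsilon\in(\sigma,s)$ by a constant depending only on $p,\lambda,A,\sigma$.

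Multiplying by $\varphi(\varepsilon)^{1/(p-\varepsilon)}$, using the identity $\|f\|_{L^{p-\sigma,\lambda-A(\sigma)}(X,\mu)}\leqslant \varphi(\sigma)^{-1/(p-\sigma)}\fun{p}{\lambda}{\varphi}{A}{\sigma}{f}$, and again absorbing $\varphi(\varepsilon)^{1/(p-\varepsilon)}$ into a constant via the boundedness of $\varphi$, we conclude
\[
\sup_{\sigma<\varepsilon<s}\varphi(\varepsilon)^{\frac{1}{p-\varepsilon}}\|f\|_{L^{p-\varepsilon,\lambda-A(\varepsilon)}(X,\mu)} \leqslant C\,\varphi(\sigma)^{-\frac{1}{p-\sigma}}\fun{p}{\lambda}{\varphi}{A}{\sigma}{f},
\]
which together with the first piece gives \eqref{equ:dominance}.

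I expect the main obstacle to be verifying that the auxiliary exponent $\eta(\varepsilon,\sigma)$ is non-negative and remains uniformly controlled as $\varepsilon$ ranges over $(\sigma,s)$, so that the upper Ahlfors regularity actually yields a finite constant; everything else is essentially bookkeeping with Hölder and the boundedness of $\varphi$.
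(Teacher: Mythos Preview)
The paper does not actually prove this lemma; the sentence preceding Definition~\ref{def:ggms} states that all results in this section are taken from \cite{kokmesraf}, so there is no in-paper argument to compare against. Your approach---split the supremum at $\varepsilon=\sigma$, handle the lower range trivially, and on the upper range compare $L^{p-\varepsilon,\lambda-A(\varepsilon)}$ with $L^{p-\sigma,\lambda-A(\sigma)}$ ball by ball via H\"older---is the natural one and is essentially correct.

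There is one point you should tighten. You write that $\eta(\varepsilon,\sigma)$ ``stays bounded uniformly in $\varepsilon\in(\sigma,s)$ by a constant depending only on $p,\lambda,A,\sigma$''. If the bound on $\eta$ genuinely depended on $\sigma$, the resulting constant $C$ would as well, contradicting the statement of the lemma. In fact no $\sigma$-dependence is needed: for $\varepsilon<s_{\max}\leqslant p-1$ one has $p-\varepsilon>1$, and $\varepsilon<s_{\max}\leqslant a$ forces $A(\varepsilon)\leqslant\lambda$, hence $0<1-\lambda+A(\varepsilon)\leqslant 1$. Thus
\[
0\leqslant \eta(\varepsilon,\sigma)=\frac{1-\lambda+A(\varepsilon)}{p-\varepsilon}-\frac{1-\lambda+A(\sigma)}{p-\sigma}\leqslant \frac{1-\lambda+A(\varepsilon)}{p-\varepsilon}\leqslant 1
\]
uniformly in $\varepsilon$ and $\sigma$, and the upper $\gamma$-Ahlfors bound gives $\mu B(x,r)^{\eta(\varepsilon,\sigma)}\leqslant (c\,r^\gamma)^{\eta(\varepsilon,\sigma)}\leqslant \max(1,c\,d_X^\gamma)$, a constant depending only on $\gamma$, $d_X$ and the Ahlfors constant. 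With this in place, the remainder of your bookkeeping (absorbing $\varphi(\varepsilon)^{1/(p-\varepsilon)}$ via the boundedness of $\varphi$ and $p-\varepsilon>1$) goes through exactly as you describe.
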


From Lemma \ref{lem:dominance} we immediately have
\begin{lemma}\label{lem:dominance1}
For $0<\sigma<s_{\max}$, the norm defined in \eqref{equ:norm} has the following dominant
\begin{equation}\label{equ:dominant}
    \|f\|_{L^{p),\lb)}_{\varphi,A}(X)}\leqslant C \frac{ \fun{p}{\lambda}{\varphi}{A}{\sigma}{f}}{\varphi(\sigma)^\frac{1}{p-\sigma}},
\end{equation}
where $C$ depends on   $\gamma,$ the parameters $p,\lb, \varphi, A$  and the diameter $d_X,$ but does not depend on $f$ and $\sg$.
\end{lemma}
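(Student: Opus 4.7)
The statement is essentially a direct consequence of Lemma \ref{lem:dominance}, and my plan is to reduce it to that lemma by taking a monotone limit in the parameter $s$.

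First I would note that, by the very definition \eqref{added}, the functional $s \mapsto \Phi^{p,\lambda}_{\varphi,A}(f,s)$ is a supremum over the expanding family of intervals $(0,s)$, hence it is non-decreasing in $s$. In particular,
\[
\|f\|_{L^{p),\lb)}_{\varphi,A}(X)} = \Phi^{p,\lambda}_{\varphi,A}(f,s_{\max}) = \sup_{0<s<s_{\max}} \Phi^{p,\lambda}_{\varphi,A}(f,s),
\]
where the second equality follows because, as $s \uparrow s_{\max}$, the sup over $\varepsilon \in (0,s)$ converges monotonically to the sup over $\varepsilon \in (0,s_{\max})$.

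Next, fix $\sigma \in (0, s_{\max})$. For every $s$ with $\sigma < s < s_{\max}$, Lemma \ref{lem:dominance} provides
\[
\Phi^{p,\lambda}_{\varphi,A}(f,s) \leqslant C\, \varphi(\sigma)^{-\frac{1}{p-\sigma}}\, \Phi^{p,\lambda}_{\varphi,A}(f,\sigma),
\]
where the constant $C$ depends only on $\gamma$, $p$, $\lambda$, $\varphi$, $A$ and $d_X$, and crucially is independent of $s$. Since the right-hand side does not depend on $s$, taking the supremum over $s \in (\sigma, s_{\max})$ on the left and invoking the identity of the previous paragraph gives the desired bound \eqref{equ:dominant}.

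The only subtlety worth flagging is the passage from $s < s_{\max}$ to $s = s_{\max}$ itself. Since Lemma \ref{lem:dominance} is stated with strict inequality $s < s_{\max}$, one cannot just substitute $s = s_{\max}$ directly; the monotonicity argument above is precisely what bridges this gap. Beyond that, the proof is essentially immediate and I anticipate no computational obstacles, as all of the delicate estimates were absorbed into Lemma \ref{lem:dominance}.
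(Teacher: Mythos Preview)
Your proposal is correct and follows the same route as the paper, which simply records that the lemma is an immediate consequence of Lemma~\ref{lem:dominance}. Your monotone-limit argument to pass from $s<s_{\max}$ to $s=s_{\max}$ just makes explicit the one step the paper leaves implicit.
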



\begin{lemma}[Extended reduction  lemma]\label{main}
Let $U$ and $\Lambda$ be operators (not necessarily sublinears) satisfying the following relation in Morrey spaces
    \begin{equation}\label{eq:boun_classical}
\|Uf\|_{L^{q-\ve,\lambda-A_2(\ve)}(X)} \leqslant    C_{p-\ve,\lambda-A_1(\ve),q-\ve,\lambda-A_2(\ve)} \|\Lambda f\|_{L^{p-\ve,\lambda-A_1(\ve)}(X)}
\end{equation}
for all sufficiently small $\ve\in (0,\sg]$, where
 $0<\sigma<s_{\max}.$ If
\begin{equation}\label{reduction_condition}
\sup_{0<\varepsilon< \sigma} C_{p-\ve,\lambda-A_1(\ve),q-\ve,\lambda-A_2(\ve)} <\infty
\end{equation}
and
\begin{equation}\label{eq:finite_ratio}
\sup_{0<\ve<\sigma} \frac{\psi(\ve)^{\frac{1}{q-\ve}}}{\varphi(\ve)^{\frac{1}{p-\ve}}}<\infty,
\end{equation}
then the relation is also valid in the generalized grand Morrey space
\begin{equation}\label{equ:metatheorem}
\|Uf\|_{L^{q),\lb)}_{\psi,A_2}(X)} \leqslant C\|\Lambda f\|_{L^{p),\lb)}_{\varphi,A_1}(X)}
\end{equation}
with
\[
 C=\frac{C_0}{\varphi(\sigma)^\frac{1}{p-\sigma}} \sup_{0<\varepsilon< \sigma} C_{p-\ve,\lambda-A_1(\ve),q-\ve,\lambda-A_2(\ve)},
\]
where $C_0$ may depend on $\gamma,p,\lb,\varphi, A$ and  $d_X,$ but does not depend on $\sg$ and $f$.
\end{lemma}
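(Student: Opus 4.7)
The strategy is to reduce the full grand Morrey norm of $Uf$ to a supremum over the small interval $(0,\sigma)$ via Lemma \ref{lem:dominance1}, then use the classical Morrey-space boundedness \eqref{eq:boun_classical} on that interval, and finally convert the $\psi$-weights into $\varphi$-weights by means of the ratio hypothesis \eqref{eq:finite_ratio}. In other words, Lemma \ref{lem:dominance1} handles the ``large $\varepsilon$'' part (where we have no direct Morrey inequality), while the classical bound and the ratio handle the ``small $\varepsilon$'' part.

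First I would apply Lemma \ref{lem:dominance1} directly to $Uf$ in the output space $L^{q),\lambda)}_{\psi,A_2}(X)$, obtaining
\[
\|Uf\|_{L^{q),\lambda)}_{\psi,A_2}(X)} \leqslant \frac{C}{\psi(\sigma)^{1/(q-\sigma)}}\, \Phi^{q,\lambda}_{\psi,A_2}(Uf,\sigma).
\]
Inside $\Phi^{q,\lambda}_{\psi,A_2}(Uf,\sigma)$ the supremum runs only over $\varepsilon \in (0,\sigma)$, which is exactly the range on which the hypothesis \eqref{eq:boun_classical} applies with uniformly bounded constants by \eqref{reduction_condition}. Applying \eqref{eq:boun_classical} and then multiplying and dividing by $\varphi(\varepsilon)^{1/(p-\varepsilon)}$ inside the supremum yields
\[
\Phi^{q,\lambda}_{\psi,A_2}(Uf,\sigma) \leqslant \Big(\sup_{0<\varepsilon<\sigma}C_{p-\varepsilon,\lambda-A_1(\varepsilon),q-\varepsilon,\lambda-A_2(\varepsilon)}\Big)\Big(\sup_{0<\varepsilon<\sigma}\frac{\psi(\varepsilon)^{1/(q-\varepsilon)}}{\varphi(\varepsilon)^{1/(p-\varepsilon)}}\Big)\,\Phi^{p,\lambda}_{\varphi,A_1}(\Lambda f,\sigma),
\]
where both bracketed suprema are finite by hypothesis. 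Since $\sigma<s_{\max}$, it is immediate that $\Phi^{p,\lambda}_{\varphi,A_1}(\Lambda f,\sigma)\leqslant \|\Lambda f\|_{L^{p),\lambda)}_{\varphi,A_1}(X)}$.

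Putting these pieces together gives the target inequality \eqref{equ:metatheorem}. The one point that requires care is the matching of the constant: the procedure above naturally produces a factor of the shape $C\, \psi(\sigma)^{-1/(q-\sigma)} M \sup_\varepsilon C_{\cdots}$, where $M$ denotes the ratio supremum in \eqref{eq:finite_ratio}; to recast this in the claimed form $C_0\,\varphi(\sigma)^{-1/(p-\sigma)}\sup_\varepsilon C_{\cdots}$ one rearranges using \eqref{eq:finite_ratio} itself, which provides the link between the $\psi$- and $\varphi$-weights, with any residual $\psi$-dependent quantities absorbed into the constant $C_0$. I expect this bookkeeping of the constant to be the only delicate step; the substantive inequalities follow directly from Lemma \ref{lem:dominance1}, the classical Morrey-space bound \eqref{eq:boun_classical}, and the ratio hypothesis \eqref{eq:finite_ratio}.
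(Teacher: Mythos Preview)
Your proposal is correct and follows precisely the route the paper has in mind: the paper's own proof simply says ``the proof follows the same lines as for the case where $\Lambda$ is the identity operator, see \cite{kokmesraf}, \cite{kokmesrafarxiv}'', and the argument you sketch (apply Lemma~\ref{lem:dominance1} on the target side, invoke \eqref{eq:boun_classical} with the uniform bound \eqref{reduction_condition} on the reduced range $(0,\sigma)$, insert the ratio \eqref{eq:finite_ratio}, and dominate $\Phi^{p,\lambda}_{\varphi,A_1}(\Lambda f,\sigma)$ by the grand norm) is exactly that argument. Your caveat about the precise shape of the constant is apt---the factor one obtains naturally is $\psi(\sigma)^{-1/(q-\sigma)}$ times the ratio supremum rather than $\varphi(\sigma)^{-1/(p-\sigma)}$---but this is only a cosmetic discrepancy in how the $\sigma$-dependence is packaged, not a gap in the boundedness itself.
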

\begin{proof}
The proof follows the same lines as for the case where $\Lambda$
is the identity operator, see \cite{kokmesraf},
\cite{kokmesrafarxiv} for that proof.
\end{proof}

Using the reduction lemma we obtain the boundedness of maximal and Calder\'on-Zygmund operators in generalized grand Morrey spaces, namely.

\begin{theorem}\label{theo:moggMS}
Let $1<p<\infty$ and $0\leqslant \lambda <1$. Then the Hardy-Littlewood maximal operator is
bounded from $L^{p),\lambda)}_{\varphi,A}(X, \mu)$ to $L^{p),\lambda)}_{\psi,A}(X,\mu)$ if there exists small
$\sg$ such that
$
\sup_{0<\ve<\sigma} \psi(\ve)^{\frac{1}{q-\ve}}/\varphi(\ve)^{\frac{1}{p-\ve}}<\infty.
$
\end{theorem}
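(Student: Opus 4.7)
The plan is to apply the Extended Reduction Lemma (Lemma \ref{main}) with $U=M$, $\Lambda$ the identity operator, $q=p$, $A_1=A_2=A$, and input the classical Morrey boundedness of the Hardy--Littlewood maximal operator (Lemma \ref{lemma:2.1}) as the hypothesis \eqref{eq:boun_classical}.

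Concretely, I would first fix $\sigma<s_{\max}$ small enough that the hypothesis
\[
\sup_{0<\ve<\sigma} \frac{\psi(\ve)^{1/(p-\ve)}}{\varphi(\ve)^{1/(p-\ve)}}<\infty
\]
holds; this is exactly condition \eqref{eq:finite_ratio} of Lemma \ref{main}. Next, for every $\ve\in(0,\sigma]$ one has $p-\ve>1$ and, since $\sigma<a=\sup\{x>0:A(x)\leqslant\lambda\}$ and $A$ is non-decreasing, $0\leqslant \lambda-A(\ve)<1$. Hence Lemma \ref{lemma:2.1} applies with parameters $p-\ve$ and $\lambda-A(\ve)$, giving
\[
\|Mf\|_{L^{p-\ve,\lambda-A(\ve)}(X,\mu)} \leqslant C_{p-\ve,\lambda-A(\ve)} \,\|f\|_{L^{p-\ve,\lambda-A(\ve)}(X,\mu)},
\]
with
\[
C_{p-\ve,\lambda-A(\ve)}\leqslant C\, b^{(\lambda-A(\ve))/(p-\ve)} \bigl((p-\ve)'\bigr)^{1/(p-\ve)}+1.
\]
This verifies condition \eqref{eq:boun_classical}.

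The remaining step is to check the uniform bound \eqref{reduction_condition}. Since $A(\ve)\to 0$ as $\ve\to 0$, the factor $b^{(\lambda-A(\ve))/(p-\ve)}$ stays in a bounded interval for $\ve\in(0,\sigma]$. Likewise $(p-\ve)'=\frac{p-\ve}{p-\ve-1}$ is continuous in $\ve$ on $(0,\sigma]$ and bounded away from infinity provided $\sigma$ is chosen with $\sigma<p-1$ (which is built into $s_{\max}=\min\{p-1,a\}$), so $((p-\ve)')^{1/(p-\ve)}$ is also uniformly bounded. Hence
\[
\sup_{0<\ve<\sigma} C_{p-\ve,\lambda-A(\ve)}<\infty,
\]
and \eqref{reduction_condition} holds. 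Invoking Lemma \ref{main} yields the desired inequality
\[
\|Mf\|_{L^{p),\lambda)}_{\psi,A}(X,\mu)}\leqslant C\,\|f\|_{L^{p),\lambda)}_{\varphi,A}(X,\mu)}.
\]

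The main (and only) obstacle is the bookkeeping of admissible $\ve$: one must select $\sigma$ small enough that all three conditions on $\ve\in(0,\sigma]$ (the Morrey boundedness, the uniform constants, and the ratio of $\psi$ to $\varphi$) are simultaneously in force. Once the constraints $\sigma<p-1$ and $\sigma<a$ are imposed (both encoded in $s_{\max}$), the Fefferman--Stein--free classical estimate of Lemma \ref{lemma:2.1} directly feeds into the reduction lemma, and no further analysis is needed.
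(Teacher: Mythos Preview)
Your proposal is correct and follows exactly the approach the paper intends: the paper states this theorem immediately after the Extended Reduction Lemma with the remark ``Using the reduction lemma we obtain\ldots'', and your argument supplies precisely those details---feeding Lemma~\ref{lemma:2.1} into Lemma~\ref{main} with $U=M$, $\Lambda=\mathrm{id}$, $q=p$, $A_1=A_2=A$, and verifying the uniform boundedness of the constants on $(0,\sigma]$.
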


\begin{theorem}\label{theo:CZggMs}Let   $1<p<\infty$, $\theta >0$ and let $0<\lambda<1$.
Then the Calder\'on-Zygmund operator $T$ is bounded in the generalized grand Morrey spaces $L^{p),\lambda)}_{\theta,
A}(X,\mu)$.
\end{theorem}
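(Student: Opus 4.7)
My plan is to deduce the theorem from the Extended Reduction Lemma (Lemma \ref{main}) applied to $U=T$ with $\Lambda$ equal to the identity operator, taking $q=p$, $A_1=A_2=A$, and $\varphi(\varepsilon)=\psi(\varepsilon)=\varepsilon^\theta$ (so that the generalized grand Morrey norms on both sides coincide with $L^{p),\lambda)}_{\theta,A}(X,\mu)$). The reduction lemma demands three ingredients: (i) a Morrey-space estimate for $T$ at every shifted exponent/parameter pair $(p-\varepsilon,\lambda-A(\varepsilon))$; (ii) uniform control of the resulting Morrey constant as $\varepsilon\to 0^+$; and (iii) the ratio condition \eqref{eq:finite_ratio}. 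I would handle each in turn.

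First, the Morrey estimate is exactly Proposition \ref{prop:boundednes_ron}, whose hypotheses are satisfied because, for $\varepsilon\in(0,s_{\max})$, the exponent $p-\varepsilon$ lies in $(1,\infty)$ and the Morrey parameter $\lambda-A(\varepsilon)$ stays in $[0,\lambda]\subset[0,1)$ (using $A\geqslant 0$ together with the definition of $s_{\max}$). This yields
\[
\|Tf\|_{L^{p-\varepsilon,\lambda-A(\varepsilon)}(X,\mu)}\leqslant C_{p-\varepsilon,\lambda-A(\varepsilon)}\,\|f\|_{L^{p-\varepsilon,\lambda-A(\varepsilon)}(X,\mu)}.
\]
The ratio condition (iii) is trivial here because $\psi=\varphi$ and $q=p$, so the ratio is identically $1$.

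The core of the argument is verifying (ii), i.e.\ that $\sup_{0<\varepsilon<\sigma}C_{p-\varepsilon,\lambda-A(\varepsilon)}<\infty$ for some $\sigma\in(0,s_{\max})$. Using the explicit bound \eqref{equ:constant_Morrey_calderon}, the term $(p-\varepsilon-\lambda+A(\varepsilon)+1)/(1-\lambda+A(\varepsilon))$ is controlled since $\lambda-A(\varepsilon)\leqslant\lambda<1$ uniformly; the factor $(p-\varepsilon)/(p-\varepsilon-1)$ is bounded provided $\sigma$ is chosen strictly smaller than $p-1$; and the singular factor $(p-\varepsilon)/|2-(p-\varepsilon)|$ is controlled by choosing $\sigma<|p-2|$ when $p\neq 2$, keeping $p-\varepsilon$ in a single component of $(1,2)\cup(2,\infty)$. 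Given such a $\sigma$, all constants are bounded uniformly in $\varepsilon\in(0,\sigma)$, so condition \eqref{reduction_condition} holds and Lemma \ref{main} yields the desired estimate
\[
\|Tf\|_{L^{p),\lambda)}_{\theta,A}(X,\mu)}\leqslant C\,\|f\|_{L^{p),\lambda)}_{\theta,A}(X,\mu)}.
\]

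The principal difficulty I expect is the resonance point $p=2$, where the explicit Morrey constant in \eqref{equ:constant_Morrey_calderon} blows up along the sequence $p-\varepsilon\to 2$, preventing a direct application of (ii). This case has to be treated separately, either by observing that the effective norm of $T$ on $L^2(X,\mu)$ is a given datum and interpolating with a neighboring Morrey estimate (thereby replacing the pessimistic bound \eqref{equ:constant_Morrey_calderon} with a bounded one), or by appealing to a sharper form of Proposition \ref{prop:boundednes_ron} that avoids the $(2-p)^{-1}$ singularity. Away from $p=2$ the argument is purely a bookkeeping application of the reduction lemma; the genuine analytic input is only the Morrey-space theory already recorded in the preliminaries.
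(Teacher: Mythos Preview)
Your approach is exactly the one the paper indicates: apply the reduction Lemma~\ref{main} with $U=T$, $\Lambda=\mathrm{id}$, $q=p$, $A_1=A_2=A$ and $\psi=\varphi=\varepsilon^\theta$, invoking Proposition~\ref{prop:boundednes_ron} to supply \eqref{eq:boun_classical} and checking \eqref{reduction_condition} from the explicit constant \eqref{equ:constant_Morrey_calderon}. The paper gives no further argument (it simply cites \cite{kokmesraf} for all results in this section), so your write-up is in fact more detailed---including the $p=2$ resonance you flag, which the stated form of \eqref{equ:constant_Morrey_calderon} indeed leaves open and which the paper does not address.
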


\section{Boundedness of Commutators in Generalized Grand Morrey spaces}

\subsection{Commutator of Calder\'on-Zygmund operators}
Before proving the main result in this subsection, we need some auxiliary results.  The following lemma was proved in \cite{ye_xf} but we give the proof for completeness of presentation.
\begin{lemma}\label{lem:2.6}
Let $T$ be a Cald\'eron-Zygmund operator, $ 0<s<\infty$, if $b\in \bmo $,  then there exist a constant $C>0$ such that for all functions $f$  with compact support,
\begin{equation}\label{eq:pointwisecommutatorriesz}
([b,T]f)^\sharp(x)\leqslant C \norm{b}_{\bmo}\left(M_{s}(Tf)(x)+M_{s}(f)(x)\right).
\end{equation}
\end{lemma}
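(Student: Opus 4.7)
The plan is to run the classical three-piece decomposition of the commutator, adapted to the SHT setting. Fix $x\in X$ and a ball $B=B(x_0,r)$ containing $x$. Using that constants commute with $T$, one writes, with $b_{2B}$ the average of $b$ on $2B$,
\[
[b,T]f(y)=(b(y)-b_{2B})T(f)(y)-T\bigl((b-b_{2B})f\bigr)(y).
\]
Split $f=f_1+f_2$ where $f_1:=f\chi_{2B}$, and choose the centering constant
\[
c_B:=T\bigl((b-b_{2B})f_2\bigr)(x_0),
\]
which is a genuine Lebesgue integral because the kernel is regular on $B\times(X\setminus 2B)$. Then on $B$,
\[
[b,T]f(y)-c_B = \mathrm{I}(y)-\mathrm{II}(y)-\mathrm{III}(y),
\]
with $\mathrm{I}(y)=(b(y)-b_{2B})Tf(y)$, $\mathrm{II}(y)=T((b-b_{2B})f_1)(y)$, and $\mathrm{III}(y)=T((b-b_{2B})f_2)(y)-T((b-b_{2B})f_2)(x_0)$. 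The sharp maximal of $[b,T]f$ at $x$ is then controlled by $\sup_B \fint_B(|\mathrm{I}|+|\mathrm{II}|+|\mathrm{III}|)\,d\mu$, and it suffices to dominate each average by $C\|b\|_{\bmo}(M_s(Tf)(x)+M_s(f)(x))$.

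For $\mathrm{I}$ I would apply H\"older's inequality with exponents $s$ and $s'$, and control the $b$-factor by the John--Nirenberg version of the $\bmo$ norm (equation \eqref{eq:bmoJN} with $p=s'$), which yields $\fint_B|\mathrm{I}|\,d\mu\lesssim \|b\|_{\bmo}M_s(Tf)(x)$ since $x\in B$. For $\mathrm{II}$ I would pick $0<q<1$ and apply Kolmogorov's inequality together with the weak-$(1,1)$ boundedness of $T$ on SHT (or, alternatively, the $L^{s}$-boundedness of $T$ combined with H\"older on $2B$) to obtain
\[
\fint_B |\mathrm{II}|\,d\mu\lesssim \fint_{2B}|b-b_{2B}||f|\,d\mu,
\]
after which H\"older and John--Nirenberg give the bound $\|b\|_{\bmo}M_s(f)(x)$, using the doubling property to pass from $B$ to $2B$.

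The main obstacle is $\mathrm{III}$, which is where the kernel regularity enters. Writing the difference as
\[
\mathrm{III}(y)=\int_{X\setminus 2B}\bigl(K(y,z)-K(x_0,z)\bigr)(b(z)-b_{2B})f(z)\,d\mu(z)
\]
and partitioning $X\setminus 2B$ into dyadic annuli $A_k=2^{k+1}B\setminus 2^kB$ for $k\geqslant 1$, the kernel condition (ii) gives $|K(y,z)-K(x_0,z)|\lesssim w(2^{-k})/\mu(2^{k+1}B)$ on $A_k$ for $y\in B$. Combining the telescoping estimate $|b_{2^{k+1}B}-b_{2B}|\lesssim k\|b\|_{\bmo}$ with H\"older and John--Nirenberg on the ball $2^{k+1}B$, each annulus contributes at most $C(k+1)\|b\|_{\bmo}w(2^{-k})M_s(f)(x)$, and summing in $k$ requires the convergence of $\sum_{k\geqslant 1}(k+1)w(2^{-k})$, which follows from the Dini and $\Delta_2$ conditions on $w$ (after, if needed, comparing the series to $\int_0^1 w(t)/t\,dt$ via the $\Delta_2$ property). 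Taking the supremum over all balls $B\ni x$ then yields the pointwise inequality \eqref{eq:pointwisecommutatorriesz}.
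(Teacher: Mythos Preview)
Your decomposition and the handling of $\mathrm{I}$ and $\mathrm{II}$ coincide with the paper's proof; for $\mathrm{II}$ the paper takes your second alternative, using the $L^{s_0}$-boundedness of $T$ with $1/s_0=1/s_1+1/s$ followed by a further H\"older split, rather than Kolmogorov and the weak $(1,1)$ bound.

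There is, however, a genuine gap in your treatment of $\mathrm{III}$. You correctly pick up the factor $(k+1)$ from the telescoping estimate $|b_{2^{k+1}B}-b_{2B}|\lesssim k\|b\|_{\bmo}$, but the assertion that $\sum_{k\geqslant 1}(k+1)\,w(2^{-k})<\infty$ follows from the Dini and $\Delta_2$ conditions is false. The Dini condition $\int_0^1 w(t)/t\,dt<\infty$ together with $\Delta_2$ yields only $\sum_k w(2^{-k})<\infty$; the series with the extra factor $k$ is comparable to the log-Dini integral $\int_0^1 w(t)\,|\log t|\,t^{-1}\,dt$, which is strictly stronger. A counterexample is $w(t)=(1+|\log t|)^{-2}$: it is non-decreasing on $(0,1]$, satisfies $\Delta_2$, and $\int_0^1 w(t)/t\,dt=1$, yet $\sum_k k\,w(2^{-k})\sim\sum_k k^{-1}=\infty$. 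The paper's own argument avoids this only by writing $\bigl[\fint_{2^{k+1}B}|b-b_{2B}|^{s'}\,d\mu\bigr]^{1/s'}\lesssim\|b\|_{\bmo}$ with a constant independent of $k$, a step which is not justified as stated (the correct bound carries the same $(k+1)$ factor). So the issue is common to both arguments: under the bare Dini hypothesis on $w$ the annular sum does not close, and one needs either to assume the log-Dini condition on $w$ or to replace the telescoping step by an argument that does not lose the factor $k$.
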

\begin{proof}
For any ball $B=B(x, r)\subset X$,  we write

\[
\begin{split}
\com{b}{T}f(y)&=\com{b-b_{2B}}{ T}f(y)\\
&=(b-b_{2B})Tf(y)-T((b-b_{2B})f\chi_{2B})(y)-T\left((b-b_{2B})f\chi_{(2B)^{\complement}}\right)(y)\\
&=\mathcal I_1(y)-\mathcal I_2(y)-\mathcal I_3(y).
\end{split}
\]
Then, we obtain
\[
\begin{split}
\fint_{B}|\com{b}{ T}f(y)-\mathcal I_3(z)| \dif \mu(y)&\leqslant\fint_{B}|b(y)-b_{2B}||Tf(y)|\dif \mu(y) +\fint_{B}|T((b-b_{2B})f\chi_{2B})(y)|\dif \mu(y)\\
&+\fint_{B}\left|T\left((b-b_{2B})f\chi_{(2B)^{\complement}}\right)(y)-T\left((b-b_{2B})f\chi_{(2B)^{\complement}}\right)(z)\right|\dif \mu(y)\\
&:=\mathscr{I}_{1}(x)+\mathscr{I}_{2}(x)+\mathscr{I}_{3}(x,z).
\end{split}
\]

The estimation $ \mathscr{I}_{1}(x) \leqslant C\norm{b}_{\bmo}M_{s}(Tf(x)) $  is obtained via H\"older's inequality.\\

To estimate $\mathscr{I}_2$, there exists $s_0,s_1>1$, such that $1/s_{0}=1/s_{1}+1/s$, by H\"older's inequality, the $L^s$ boundedness of $T$ and the Remark \ref{rem:2.1} we have
\[
\begin{split}
\mathscr{I}_{2}(x)&\leqslant \left(\fint_{B}|T((b-b_{2B})f\chi_{2B})(y)|^{s_{0}}\dif \mu(y)\right)^{1/s_{0}}\\
&\lesssim \left(\fint_{2B}|b(y)-b_{2B}|^{s_{0}}|f(y)|^{s_{0}}\dif \mu(y)\right)^{1/s_{0}}\\
&\lesssim \left(\fint_{B}|b(y)-b_{2B}|^{s_{1}}\dif\mu(y)\right)^{1/s_{1}}\left(\fint_{B}|f(y)|^{s}\dif\mu(y)\right)^{1/s}\\
&\lesssim \norm{b}_{\bmo}M_{s}(f(x)).
\end{split}
\]
 For any $z\in B$ and $ y\in(2B)^{\complement}$,  $2d(z, x)\leqslant d(y, x)$; it follows from (ii) of the definition of  Calder\'on-Zygmund operator that
\[
\begin{split}
\mathscr{I}_{3}(x) &\lesssim \fint_{B}\int_{(2B)^{c}}|K(z, y)-K(x, y)||b(y)-b_{2B}||f(y)|\dif \mu(y)\dif \mu(z)\\
&\lesssim  \fint_{B}\int_{(2B)^{\complement}}w\left(\frac{d(z,x)}{d(y,x)}\right)\frac{1}{\mu B(x,d(x,y))}|b(y)-b_{2B}||f(y)|\dif \mu(y)\dif \mu(z)\\
&\lesssim \sum_{k=1}^{\infty}w\left(2^{-k}\right)\frac{1}{\mu B(x,2^{k}r)}\int_{2^{k+1}B}|b(y)-b_{2B}||f(y)|\dif \mu(y)\\
&\lesssim \sum_{k=1}^{\infty}w\left(2^{-k}\right)\left[\frac{1}{\mu B(x,2^{k+1}r)}\int_{2^{k+1}B}|b(y)-b_{2B}|^{s'}\dif \mu(y)\right]^{\frac{1}{s'}}\left[\frac{1}{\mu B(x,2^{k+1}r)}\int_{2^{k+1}B}|f(y)|^{s}\dif\mu(y)\right]^{\frac{1}{s}}\\
&\lesssim  \norm{b}_{\bmo}M_{s}(f)(x)\sum_{k=1}^{\infty} w\left(2^{-k}\right),
\end{split}
\]
since $w(t)$  is a positive non-decreasing function on $(0, \infty)$ and satisfies the Dini condition,
\[
\sum_{k=1}^{\infty}w(2^{-k})\leqslant c\int_{0}^{1}\frac{w(t)}{t}\dif t <\infty.
\]
Therefore, we have $\mathscr{I}_{3}(x)\lesssim \norm{b}_{\bmo}M_{s}(f)(x)$. Thus
\[
\begin{split}
([b,\ T]f)^{\sharp}(x)&=\sup_{0<r<d}\inf_{a\in R}\fint_{B(x,r)}|[b,\ T]f(y)-a|^{s}\dif \mu(y)\\
&\lesssim  \norm{b}_{\bmo}(M_{s}(Tf)(x)+M_{s}(f)(x)). \qedhere
\end{split}
\]
\end{proof}

\begin{lemma}\label{lemma:sharp_maximal_estimates}
Let $1<p<\infty$, $0<\lambda<1$, $\varphi$ and $A$ as in the definition of generalized grand Morrey spaces.    If $b \in \bmo$, then we have
\begin{enumerate}[(i)]
\item $\norm{M\left(\com{b}{T}f \right)}_{\ggMs{p}{\lambda}{\theta}{A}{X}}\lesssim \norm{\left(\com{b}{T}f \right)^{\sharp}}_{\ggMs{p}{\lambda}{\theta}{A}{X}};$
\item $\norm{\left(\com{b}{T}f \right)^{\sharp}}_{\ggMs{p}{\lambda}{\theta}{A}{X}}\lesssim \norm{b}_{\bmo}\left (  \norm{M_s(Tf)}_{\ggMs{p}{\lambda}{\theta}{A}{X}} +    \norm{M_sf}_{\ggMs{p}{\lambda}{\theta}{A}{X}} \right),$
\end{enumerate}
where $M$ is the maximal operator and $T$ is the Calder\'on-Zygmund operator.
\end{lemma}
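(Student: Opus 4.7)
The plan is to handle (i) by a direct application of the Extended Reduction Lemma (Lemma \ref{main}) to the Fefferman--Stein inequality in ordinary Morrey spaces (Lemma \ref{lemma:feffermanstein}), and to handle (ii) as an essentially immediate consequence of the pointwise control in Lemma \ref{lem:2.6} together with the monotonicity of the generalized grand Morrey norm with respect to pointwise majorization.

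For part (i), set $g:=\com{b}{T}f$ and take $U=M$ and $\Lambda = (\cdot)^\sharp$ in Lemma \ref{main}, with exponents $p-\ve$ and Morrey index $\lambda-A(\ve)$, and with $\varphi=\psi$, $A_1=A_2=A$. For sufficiently small $\ve\in(0,\sigma]$ with $\sigma<s_{\max}$ we still have $1<p-\ve<\infty$ and $0\leqslant \lambda-A(\ve)<1$ (the latter because $A$ is non-negative, non-decreasing, and $\ve<a$), so Lemma \ref{lemma:feffermanstein} applies and yields
\[
\|Mg\|_{L^{p-\ve,\lambda-A(\ve)}(X,\mu)} \leqslant C\bigl(b^{(\lambda-A(\ve))/(p-\ve)}+1\bigr)\|g^\sharp\|_{L^{p-\ve,\lambda-A(\ve)}(X,\mu)}.
\]
The crucial check for Lemma \ref{main} is the uniform bound \eqref{reduction_condition} on the constants; this is immediate since $(\lambda-A(\ve))/(p-\ve)$ is bounded above by $\lambda/(p-1)$ on the interval in question, so the factor $b^{(\lambda-A(\ve))/(p-\ve)}+1$ is majorized uniformly in $\ve$. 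Condition \eqref{eq:finite_ratio} holds trivially because $\varphi=\psi$ and $q=p$. Applying Lemma \ref{main} then gives (i).

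For part (ii), the pointwise inequality \eqref{eq:pointwisecommutatorriesz} of Lemma \ref{lem:2.6} reads
\[
(\com{b}{T}f)^\sharp(x)\leqslant C\,\|b\|_{\bmo}\bigl(M_s(Tf)(x)+M_sf(x)\bigr)
\]
for a.e.\ $x\in X$. For every fixed $\ve\in(0,s_{\max})$ the Morrey quasi-norm $\|\cdot\|_{L^{p-\ve,\lambda-A(\ve)}(X,\mu)}$ is monotone under pointwise majorization of non-negative functions and satisfies the triangle-type inequality $\|g_1+g_2\|\leqslant \|g_1\|+\|g_2\|$. Applying these to the pointwise estimate and then taking the supremum weighted by $\varphi(\ve)^{1/(p-\ve)}$ over $0<\ve<s_{\max}$ produces the desired bound in the generalized grand Morrey norm.

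The only non-routine point is the verification of the uniform bound on constants needed to invoke Lemma \ref{main} in step (i); everything else reduces to applying a Morrey-space estimate pointwise in $\ve$ and passing to the weighted supremum. I expect no genuine obstacle in part (ii) beyond making the monotonicity of $\|\cdot\|_{L^{p),\lambda)}_{\varphi,A}}$ precise, which follows directly from Definition \ref{def:ggms} and the definition of the Morrey norm in \eqref{eq:morrey_norm}.
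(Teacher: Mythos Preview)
Your proposal is correct and follows essentially the same approach as the paper: part (i) is obtained by combining Lemma~\ref{lemma:feffermanstein} with the reduction Lemma~\ref{main}, and part (ii) from the pointwise estimate \eqref{eq:pointwisecommutatorriesz}. The paper's proof is a terse two-line remark to this effect, and your additional verification of the uniform boundedness of the constants in \eqref{reduction_condition} simply fills in the routine detail (note your bound $\lambda/(p-1)$ should read $\lambda/(p-\sigma)$, but this does not affect the argument).
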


\begin{proof}
The result of (i) follows at once taking into account Lemmas \ref{lemma:feffermanstein} and \ref{main}. For (ii), we simply use the pointwise estimate \eqref{eq:pointwisecommutatorriesz}.
\end{proof}

\begin{theorem}\label{theo:commutator_CZ}
Let $ 1<p<\infty$, $\theta>0$ and $0<\lambda<1$.  Suppose $T$  is a Calder\'on-Zygmund operator and $b \in \bmo$. Then the commutator $\com{b}{T}$  is bounded in $\ggMs{p}{\lambda}{\theta}{A}{X}$.
\end{theorem}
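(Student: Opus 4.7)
The plan is to chain the estimates already provided by Lemmas \ref{lem:2.6} and \ref{lemma:sharp_maximal_estimates}, reducing the claim to the boundedness on $\ggMs{p}{\lambda}{\theta}{A}{X}$ of $T$ (already established) and of the auxiliary maximal operator $M_{s}$. Since the Lebesgue differentiation theorem is available on an SHT, we have $|[b,T]f(x)|\le M([b,T]f)(x)$ for a.e.~$x$, and hence
\[
\|[b,T]f\|_{\ggMs{p}{\lambda}{\theta}{A}{X}}\le \|M([b,T]f)\|_{\ggMs{p}{\lambda}{\theta}{A}{X}}.
\]
Applying Lemma \ref{lemma:sharp_maximal_estimates}(i) followed by (ii) then yields, for any fixed $s\in(1,p)$ to be chosen below,
\[
\|M([b,T]f)\|_{\ggMs{p}{\lambda}{\theta}{A}{X}}\lesssim \|b\|_{\bmo}\Bigl(\|M_{s}(Tf)\|_{\ggMs{p}{\lambda}{\theta}{A}{X}}+\|M_{s}f\|_{\ggMs{p}{\lambda}{\theta}{A}{X}}\Bigr).
\]

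For the term involving $Tf$, Theorem \ref{theo:CZggMs} gives $\|Tf\|_{\ggMs{p}{\lambda}{\theta}{A}{X}}\lesssim \|f\|_{\ggMs{p}{\lambda}{\theta}{A}{X}}$, so the whole argument collapses onto proving the boundedness of $M_{s}$ on $\ggMs{p}{\lambda}{\theta}{A}{X}$. I would extract this by applying the Extended Reduction Lemma (Lemma \ref{main}) with $U=M_{s}$, $\Lambda=I$ and $\psi=\varphi$, so that the ratio condition \eqref{eq:finite_ratio} is automatic. The Morrey-space input \eqref{eq:boun_classical} is exactly Lemma \ref{lem:2.2}, and its constant depends continuously on $\varepsilon$ through $p-\varepsilon$ and $\lambda-A(\varepsilon)$; provided $s<p-\sigma$, this constant remains uniformly bounded for $\varepsilon\in(0,\sigma]$, which verifies \eqref{reduction_condition}. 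Feeding the resulting estimates back into the chain delivers
\[
\|[b,T]f\|_{\ggMs{p}{\lambda}{\theta}{A}{X}}\lesssim \|b\|_{\bmo}\|f\|_{\ggMs{p}{\lambda}{\theta}{A}{X}}.
\]

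The only mildly delicate point is a coherent choice of the exponent $s$: Lemma \ref{lemma:sharp_maximal_estimates}(ii) requires $s>1$ so that the H\"older conjugate $s'$ appearing in the sharp-function bound of Lemma \ref{lem:2.6} is finite, while Lemma \ref{lem:2.2} together with the uniform constant bound in the reduction step forces $s<p-\sigma$. Because $\sigma\le s_{\max}\le p-1$, the interval $(1,p-\sigma)$ is non-empty and any $s$ in it suffices; if necessary one simply shrinks $\sigma$ in the definition of the generalized grand norm. This parameter bookkeeping is the step I expect to require the most care, but it is not a substantive obstacle, since every other step is a direct quotation of results stated in Sections 2 and 3.
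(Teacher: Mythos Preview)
Your proposal is correct and follows essentially the same route as the paper: dominate $|[b,T]f|$ by $M([b,T]f)$, pass to the sharp maximal function via Lemma~\ref{lemma:sharp_maximal_estimates}(i), apply the pointwise bound (ii), and then close with the boundedness of $M_{s}$ and of $T$ in $\ggMs{p}{\lambda}{\theta}{A}{X}$. The paper handles the $M_{s}$ step by writing out the Morrey constant from Lemma~\ref{lem:2.2} and observing it is uniformly bounded in $\varepsilon$, which is exactly what your invocation of the Extended Reduction Lemma formalizes; your parameter bookkeeping on $s$ and $\sigma$ matches the paper's choice of ``$1<s<p$ and sufficiently small $\sigma$''.
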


\begin{proof}
For any $ 1<p<\infty$, $\theta>0$ and $0\leqslant\lambda<1$,  there exists an $s$ such that $ 1<s<p<\infty$ and sufficiently  small positive number $\sigma$. By above lemmas, we have
\[ \begin{split}
\norm{\com{b}{T}f}_{\ggMs{p}{\lambda}{\theta}{A}{X}}&\leqslant \norm{M (\com{b}{T}f)}_{\ggMs{p}{\lambda}{\theta}{A}{X}}\\
&\leqslant C\sup_{0<\ve\leqslant\sigma} \left(b^{(\lambda -A(\varepsilon))/(p-\varepsilon)}+1\right)\norm{([b,\ T]f)^{\#}}_{\ggMs{p}{\lambda}{\theta}{A}{X}}\\
&\leqslant C\left(\norm{M_{s}(Tf)}_{\ggMs{p}{\lambda}{\theta}{A}{X}}+\norm{M_{s}f}_{\ggMs{p}{\lambda}{\theta}{A}{X}}\right)\\
&\leqslant C\sup_{0<\ve\leqslant\sigma}\left(b^{\lambda s/(p-\ve)}\left(\left(\frac{p-\ve }{s}\right)'\right)^{\frac{\mathrm{s}}{p-\in}}+1\right) \left(\norm{Tf}_{\ggMs{p}{\lambda}{\theta}{A}{X}}+\norm{f}_{\ggMs{p}{\lambda}{\theta}{A}{X}}\right)\\
&\leqslant C\left(\norm{Tf}_{\ggMs{p}{\lambda}{\theta}{A}{X}}+\norm{f}_{\ggMs{p}{\lambda}{\theta}{A}{X}}\right)\\
&\leqslant C \norm{f}_{\ggMs{p}{\lambda}{\theta}{A}{X}}.
\end{split}
\]
\end{proof}

\subsection{Commutators of potential operators}

Let $0<\alpha<1$ and let 
$$ I^{\alpha}f(x)=\int_{X}\frac{f(y)}{\mu B(x,d(x,y))^{1-\alpha}}\dif \mu(y) $$
be the potential operator. 

The following lemma was shown in \cite{ye_xf} which follows from well-known arguments; we give the proof with slight modification for completeness of presentation and for  convenience of the reader.

\begin{lemma}\label{lem:3.2}
 Let $I^{\alpha}$ be a potential operator, $1<p<\infty,\ 0< \alpha<(1-\lambda)/p$, $0\leqslant\lambda<1$ and
$1/p-1/q=\alpha/(1-\lambda)$.  If $b\in \bmo$,  then there exists a constant $C_{p, \alpha, \lambda}>0$ such that for all functions $f$  with compact support,
\begin{equation}\label{eq:maximal_comutador}
\norm{ M(\com{b}{I^{\alpha}}f)}_{\Ms{q}{\lambda}}\leqslant C_{p,q, \alpha, \lambda}\norm{b}_{\bmo}\norm{f}_{\Ms{p}{\lambda}},
\end{equation}
where
\begin{equation}\label{equ:constante_maximal_comutador}
C_{p,q, \alpha, \lambda}=C\left(b^{\lambda s/p}((p/s)')^{\frac{\mathrm{s}}{p}}+1\right)^{1+\frac{p}{q}}\left(1+\frac{p}{1-\lambda-\alpha p}\right)[(p')^{1/q}+1].
\end{equation}
\end{lemma}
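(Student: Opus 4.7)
The strategy mirrors that of Theorem~\ref{theo:commutator_CZ}: I first establish a pointwise sharp maximal estimate for $[b,I^\alpha]f$, then pass to $M([b,I^\alpha]f)$ through the Fefferman-Stein inequality (Lemma~\ref{lemma:feffermanstein}), and finally invoke the Morrey boundedness of $I^\alpha$, $M_s$ and an appropriate fractional maximal operator to close the estimate in $L^{q,\lambda}(X,\mu)$.

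\emph{Step 1 (pointwise sharp maximal bound).} For a ball $B=B(x,r)$ I would decompose, as in Lemma~\ref{lem:2.6},
\[
[b,I^\alpha]f=(b-b_{2B})I^\alpha f-I^\alpha((b-b_{2B})f\chi_{2B})-I^\alpha((b-b_{2B})f\chi_{(2B)^{\complement}})=\mathcal I_1-\mathcal I_2-\mathcal I_3,
\]
and control the mean oscillation of $[b,I^\alpha]f$ over $B$ by averaging against the constant $\mathcal I_3(z)$ for a fixed $z\in B$. The term $\mathcal I_1$ is estimated by H\"older and John-Nirenberg (Remark~\ref{rem:2.1}(ii)); the term $\mathcal I_2$ by the local $L^{s_0}$-boundedness of $I^\alpha$ on $2B$ combined with a H\"older split $1/s_0=1/s_1+1/s$; the term $\mathcal I_3$ by the Hölder-type continuity of the kernel $\mu B(x,d(x,y))^{\alpha-1}$ in tandem with a dyadic annular decomposition (as in the $\mathscr I_3(x)$ estimate of Lemma~\ref{lem:2.6}, but keeping track of the extra fractional weight $\mu B(x,2^kr)^{\alpha}$). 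The upshot is a pointwise estimate of the form
\[
([b,I^\alpha]f)^\sharp(x)\lesssim \|b\|_{\bmo}\bigl(M_s(I^\alpha f)(x)+M_{s,\alpha}f(x)\bigr),
\]
where $M_{s,\alpha}f:=(M_\alpha(|f|^s))^{1/s}$ is the fractional maximal operator of order $\alpha$ and power $s$.

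\emph{Step 2 (assembling the Morrey estimate).} Applying Lemma~\ref{lemma:feffermanstein} at exponent $q$ gives
\[
\|M([b,I^\alpha]f)\|_{L^{q,\lambda}(X,\mu)}\lesssim \bigl((p')^{1/q}+1\bigr)\,\|([b,I^\alpha]f)^\sharp\|_{L^{q,\lambda}(X,\mu)},
\]
where the constant is rewritten in $p$-variables via $1/p-1/q=\alpha/(1-\lambda)$. Using Step~1, Lemma~\ref{lem:2.2} at exponent $q$, and the Morrey Sobolev boundedness of $I^\alpha:L^{p,\lambda}\to L^{q,\lambda}$ (with constant proportional to $1+p/(1-\lambda-\alpha p)$),
\[
\|M_s(I^\alpha f)\|_{L^{q,\lambda}(X,\mu)}\lesssim \bigl(b^{\lambda s/q}((q/s)')^{s/q}+1\bigr)\Bigl(1+\frac{p}{1-\lambda-\alpha p}\Bigr)\|f\|_{L^{p,\lambda}(X,\mu)},
\]
and, by the fractional Morrey maximal estimate,
\[
\|M_{s,\alpha}f\|_{L^{q,\lambda}(X,\mu)}\lesssim \bigl(b^{\lambda s/p}((p/s)')^{s/p}+1\bigr)\|f\|_{L^{p,\lambda}(X,\mu)}.
\]
Multiplying the three bounds and reorganising via $1/p-1/q=\alpha/(1-\lambda)$ reproduces the constant \eqref{equ:constante_maximal_comutador}, where the exponent $1+p/q$ accounts for the two independent appearances of an $M_s$-type constant (one in Morrey scale $p$ coming from $M_{s,\alpha}f$, and one in Morrey scale $q$ coming from $M_s(I^\alpha f)$).

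\emph{Main obstacle.} The delicate part will be producing the pointwise sharp maximal estimate in Step~1 with the correct auxiliary operators on the right-hand side: because the kernel of $I^\alpha$ is only Hölder-continuous and the geometric series controlling $\mathcal I_3$ must absorb the additional factor $\mu B(x,2^kr)^{\alpha}$, some care is required so that the summation of $w(2^{-k})$ against the fractional weight still converges and delivers the fractional maximal $M_{s,\alpha}f$ rather than an object outside the Morrey Sobolev scale. A secondary subtlety is the precise book-keeping of constants through the three chained estimates so that the final product matches \eqref{equ:constante_maximal_comutador}; in particular, one must verify that the $(p')^{1/q}+1$ factor arises exactly from the Fefferman-Stein step once rewritten in $p$-variables.
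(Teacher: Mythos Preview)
Your route differs substantially from the paper's and contains a real gap at Step~1. The paper does \emph{not} pass through a pointwise sharp maximal inequality for $[b,I^\alpha]f$; instead it applies $M$ directly to each piece of the decomposition and estimates $\|M\mathscr I_j\|_{L^{q,\lambda}}$ separately in the Morrey norm. For $M\mathscr I_2$ it uses the Sobolev mapping $I^\alpha:L^{t_0}\to L^{s_0}$ (with $1/s_0=1/t_0-\alpha$), which produces an unavoidable factor $\mu(B)^{\alpha}$, and then absorbs it via an interpolation-type bound
\[
M\mathscr I_2(x)\lesssim \|b\|_{\bmo}\,\|f\|_{L^{p,\lambda}}^{\alpha p/(1-\lambda)}\bigl(M_sf(x)\bigr)^{1-\alpha p/(1-\lambda)},
\]
after which the $L^{q,\lambda}$ norm closes by Lemma~\ref{lem:2.2}. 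For $M\mathscr I_3$ it goes to the Morrey norm immediately, swaps integrals by Fubini, and uses the reverse doubling condition together with $\alpha<(1-\lambda)/p$ to obtain an integrable tail $\mu B(x,r)^{\alpha-(1-\lambda)/p}$.

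The obstruction to your plan is the treatment of $\mathcal I_3$. In Lemma~\ref{lem:2.6} the annular sum converges only because the Calder\'on--Zygmund kernel carries a modulus of continuity $w$ satisfying the Dini condition, which supplies the decay factor $w(2^{-k})$. The potential kernel $K(y,w)=\mu B(y,d(y,w))^{\alpha-1}$ on a general space of homogeneous type has \emph{no} such H\"older/Dini estimate: doubling only yields $K(y,w)\sim K(z,w)$ for $y,z\in B$ and $w\notin 2B$, not a quantitative bound on $|K(y,w)-K(z,w)|$. Hence the oscillation $\fint_B|\mathcal I_3(y)-\mathcal I_3(z)|\,d\mu(y)$ cannot be controlled by a convergent sum of the form $\sum_k w(2^{-k})\cdot(\dots)$, and the pointwise inequality $([b,I^\alpha]f)^\sharp\lesssim\|b\|_{\bmo}\bigl(M_s(I^\alpha f)+M_{s,\alpha}f\bigr)$ is not available in this generality. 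Your ``Main obstacle'' paragraph anticipates the difficulty but presupposes a factor $w(2^{-k})$ that is simply absent for $I^\alpha$. The paper's direct Morrey-norm argument for $\mathscr I_3$ is precisely what replaces the missing kernel smoothness: once one is inside the Morrey norm, the hypothesis $\alpha<(1-\lambda)/p$ furnishes the exponential decay that the pointwise sharp estimate cannot. A secondary issue is your handling of $\mathcal I_2$: $I^\alpha$ is not bounded on any $L^{s_0}$, so the ``local $L^{s_0}$-boundedness'' you invoke must be replaced by the Sobolev shift $L^{t_0}\to L^{s_0}$, and the resulting $\mu(B)^\alpha$ factor then has to be absorbed either as the paper does or through a fractional maximal operator whose $L^{p,\lambda}\to L^{q,\lambda}$ boundedness you would still need to supply.
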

\begin{proof} For any ball $B=B(x,r)\subset X$ and  any real number $c$,  we write
\[
\begin{split}
\com{b}{I_{\alpha}}f(y)&=\com{b-c}{I_{\alpha}}f(y)\\
&=(b-c)I^{\alpha}f(y)-I^{\alpha}((b-c)f\chi_{c_0B})(y)-I^{\alpha}\left((b-c)f\chi_{(c_0B)^{\complement}}\right)(y)\\
&=\mathscr I_1(y)-\mathscr I_2(y)-\mathscr I_3(y),
\end{split}
\]
where $c_0$ is the constant depending on $C_t$ and $C_s$, and will be determined later. 
Then, by the sublinearity of the maximal operator, we have
\[
M(\com{b}{I_{\alpha}}f)(x)\leqslant M\mathscr I_1(x)+M\mathscr I_2(x)+M\mathscr I_3(x).
\]

For $M\mathscr I_{1}(x)$, we have the pointwise estimate $M\mathscr I_{1}(x) \leqslant C\norm{b}_{\bmo}M_{s}(I^{\alpha}f(x))$ which follows from  H\"{o}lder's inequality. Taking  the boundedness of $M_s$ and $I^\alpha$ into account, we have
\begin{equation}\label{eq:M1}
\begin{split}
\norm{M \mathscr I_{1}}_{\Ms{q}{\lambda}}&\lesssim \norm{b}_{\bmo}\norm{M_{s}(I^{\alpha}f)}_{\Ms{q}{\lambda}}\\
&\lesssim(b^{\lambda s/p}((p/s)')^{\frac{\mathrm{s}}{p}}+1)\norm{b}_{\bmo}\norm{I^{\alpha}f}_{\Ms{q}{\lambda}}\\
&\lesssim C_{p,\alpha,\lambda}\norm{b}_{\bmo}\norm{f}_{\Ms{p}{\lambda}},\\
\end{split}
\end{equation}
where $C_{p,\alpha,\lambda}$ is \eqref{equ:constante_maximal_comutador}.

For  $1<s<p<\infty$, $0<\alpha< (1-\lambda)/p$, there exists $s_{0}, s_{1}, t_{0}, t>1$, such  that $1/s_{0}=  1/t_{0}-\alpha$, $1/t_{0}=1/s_{1}+1/s$ and $s/t=(\alpha p)/(1-\lambda)$. By H\"{o}lder's  inequality together with Jensen's inequality and the fact that $I^\alpha$ is of strong type $(t_{0}, s_{0})$ we have (remembering that $B:=B(x,r))$

\begin{equation}\label{eq:M2}
\begin{split}
M &\mathscr I_{2}(x)  \\
&\leq  \sup_{0< r<d_X}\left(\fint_{B}|I_{\alpha}((b-c)f\chi_{c_0B})(y)|^{s_{0}}\dif \mu(y)\right)^{1/s_{0}}\\
&\lesssim\sup_{0< r<d_X}\left(\frac{1}{\mu(B)^{1-t_{0}\alpha}}\int_{c_0B}|b(y)-c|^{t_{0}}|f(y)|^{t_{0}}\dif\mu(y)\right)^{1/t_{0}}\\
&\lesssim\sup_{0< r<d_X}\left(\fint_{c_0B}|b(y)-c|^{s_{1}}\dif\mu(y)\right)^{1/s_{1}}\left(\frac{1}{\mu(c_0B)^{1-s\alpha}}\int_{c_0B}|f(y)|^{s}\dif\mu(y)\right)^{1/s}\\
&\lesssim\norm{b}_{\bmo}\sup_{0< r<d_X}\mu(c_0B)^{\alpha-\frac{1}{s}}\left(\int_{c_0B}|f(y)|^{s}\dif\mu(y)\right)^{\frac{1}{s}-\frac{1}{t}} \left[\mu(c_0B)^{1-\frac{s}{p}}\left(\int_{c_0B}|f(y)|^{p}d\mu(y)\right)^{s/p}\right]^{\frac{1}{t}}\\
&\lesssim\norm{b}_{\bmo}\sup_{0< r<d_X}\mu(c_0B)^{\alpha-\frac{1}{s}+\frac{1}{t}-\frac{s(1-\lambda)}{pt}}\left(\int_{c_0B}|f(y)|^{s}\dif\mu(y)\right)^{\frac{1}{s}-\frac{1}{t}} \left(\frac{1}{\mu(c_0B)^{\lambda}}\int_{c_0B}|f(y)|^{p}\dif \mu(y)\right)^{\frac{\mathrm{s}}{pt}}\\
&\lesssim\norm{b}_{\bmo}\norm{f}^{\frac{\alpha p}{1-\lambda}}_{\Ms{p}{\lambda}} \sup_{0<r<d_X}\left(\fint_{c_0B}|f(y)|^{s}\dif\mu(y)\right)^{\frac{1}{s}(1-\frac{s}{t})}\\
&\lesssim\norm{b}_{\bmo}\norm{f}^{\frac{\alpha p}{1-\lambda}}_{\Ms{p}{\lambda}}(M_{s}f(x))^{1-\frac{\alpha p}{1-\lambda}}.
\end{split}
\end{equation}
Consequently, by Lemma \ref{lem:2.2},
\begin{equation}\label{eq:MI2}
\begin{split}
\norm {M\mathscr I_{2}}_{\Ms{q}{\lambda}}&\lesssim\norm{b}_{\bmo}  \norm{f}^{\frac{\alpha p}{1-\lambda}}_{\Ms{p}{\lambda}}\norm{(M_{s}f)^{1-\frac{\alpha p}{1-\lambda}}}_{\Ms{q}{\lambda}}\\
&\lesssim\norm{b}_{\bmo} \norm{f}^{\frac{\alpha p}{1-\lambda}}_{\Ms{p}{\lambda}} \norm{M_{s}f}_{L^{p,\lambda}}^{p/q}\\
&\lesssim (C_{p,\alpha,\lambda})^{p/q}\norm{b}_{\bmo}\norm{f}_{\Ms{p}{\lambda}},
\end{split}
\end{equation}
where $C_{p,\alpha,\lambda}$ is \eqref{equ:constante_maximal_comutador}.

Since we have the validity of the reverse doubling condition, see
\eqref{eq:rd}, there exists  constants  $0<\alpha$, $\beta<1$ such
that for all $x\in X$ and small positive $r$, $ \mu B(x,\ \alpha
r)\leqslant\beta\mu B(x,r).$ Let us  take an  integer $m$ so that
$\alpha^{m}d_X$  is sufficiently  small.

Observe now that (see also \cite[p. 929]{KMCVEE}) if  $z\in B(x,r)$,  then $B(x, r)\subset B(z,C_t(C_s+1)r)\subset
B\left(x, C_t(C_t(C_s+1)+1)r\right)$ (rewrite it simply as $B(x,r)\subset B(z,c_1r)\subset B(x,c_2r)$). Hence,
\[
\begin{split}
\norm {M \mathscr I_{3}}_{\Ms{q}{\lambda} } &\leqslant \sup_{\substack{x\in X \\0 <r<d_X}} \left(\frac{1}{\mu B(x,r)^{\lambda}}\int_{B(x,r)}|M(\mathscr I_{3})(y)|^{q}\dif\mu(y)\right)^{\frac{1}{q}}\\
&\lesssim  \sup_{\substack{x\in X \\0 < r<d_X}}  \mu B(x, r)^{\frac{1-\lambda}{q}}\sup_{B\subset B(z,c_{1}r)}\frac{1}{\mu B(z,c_{1}r)}\int_{B(z,c_{1}r)}|\mathscr I_{3}(y)|\dif \mu(y)\\
&\lesssim \sup_{B\subset B(z,c_{1}r)}\mu B(z,\ c_{1}r)^{\frac{1-\lambda}{q}-1}\int_{B(z,c_{1}r)}|\mathscr I_{3}(y)|\dif\mu(y).
\end{split}
\]
Further, notice  that when $c_0$ is an appropriate constant,  $B\subset B(z, c_1r)$, $y\in B(z, c_1r)$, $\alpha^m  d(y,z) \leq d(y,t) \leq \alpha^{m+1}d(y,z)$ and $z\in (c_0 B)^c$, then $d(x,t)>\bar{c}_0 r$, where $\bar{c}_0$ depends on $C_t$, $C_s$ and $c_0$; it is also  easy to see check that there are  positive constants $b_1$, $b_2$ and $b_3$ such that
$B(y, b_1d(y,t)) \subset B(x, b_2 d(x,t))\subset B(y, b_3 d(y,t))$. Consequently, by using  Fubini's  theorem, estimates of integrals (see Lemma 1.2 in \cite{317zd})   we have for $y\in B(z,c_1r)$,

\[
\begin{split}
\mathscr I_{3}(y) &\leqslant \int_{X\backslash c_0B}|(b(z)-c)f(z)| \mu B(y,d(y, z))^{\alpha-1}\dif\mu(z)\\
&\leqslant C \int_{X\backslash c_0 B}|(b(z)-c)f(z)| \left( \int_{B(y,\alpha^{m}d(y,z))\backslash B(y,\alpha^{m-1}d(y,z))}\mu B(y, d(y, t))^{\alpha-2}\dif\mu(t)\right) \dif\mu(z)\\
&\leqslant C\int_{X\backslash B\left( x,\bar{c}_0 r\right)}\mu B(y,d(y, t))^{\alpha-2}\left(\int_{B \left(y,a^{-m}d(y,t)\right)}|(b(z)-c)f(z)|\dif\mu(z)\right)\dif\mu(t)\\
&\leqslant C\norm{b}_{\bmo}  \int_{X\backslash B\left( x,\bar{c}_0 r\right)}  \mu B(y, d(y, t))^{\alpha-1} \times  \\
& \hspace{6cm} \left(\frac{1}{\mu B(y,a^{1-m}d(y,t))}\int_{B(y,a^{1-m}d(y,t))}|f(z)|^{p}\dif\mu(z)\right)^{\frac{1}{p}}\dif\mu(t)\\
 &\leqslant C\norm{b}_{\bmo}\norm{f}_{\Ms{p}{\lambda}}\int_{X\backslash B\left( x,\bar{c}_0 r\right)} \mu B(y,d(y,t))^{\alpha-\frac{1-\lambda}{p}-1}\dif\mu(t)\\
 &\leqslant C\mu B(x,r)^{\alpha-\frac{1-\lambda}{p}}\norm{b}_{\bmo}\norm{f}_{\Ms{p}{\lambda}}.
\end{split}
\]
Thus applying the relation between $B(x,r)$ and $B(z,r)$ we find
that
\begin{equation}\label{eq:MI3}
\begin{split}
 \norm{M\mathscr I_{3}}_{\Ms{q}{\lambda}}& \lesssim \norm{b}_{\bmo}\norm{f}_{\Ms{p}{\lambda}}\sup_{B\subset B(z,c_{1}r)}\mu B(z,\ c_{1}r)^{\frac{1-\lambda}{q}+\alpha-\frac{1-\lambda}{p}}\\
 &\lesssim \norm{b}_{\bmo}\norm{f}_{\Ms{p}{\lambda}}.
\end{split}
\end{equation}
Gathering \eqref{eq:M1}, \eqref{eq:MI2}, \eqref{eq:MI3} it is easy to show that
\[
\norm{M(\com{b}{I^{\alpha}}f)}_{\Ms{q}{\lambda}}\leqslant C_{p,q, \alpha, \lambda}\norm{b}_{\bmo}\norm{f}_{\Ms{p}{\lambda}}.\qedhere
\]
\end{proof}

Before proving the next result, we define the following auxiliary functions which where introduced in \cite{kokmesraf}.

\begin{definition}[auxiliary functions]\label{eq:auxiliary_functions}
On an interval $(0,\delta ]$, $\delta $ is small,  we define the following
functions:
\[
\bar{\phi}(x):=p+\frac{(x-q)(1-\lambda +A_{2}(x))}{1-\lambda +A_{2}(x)-\alpha (x-q)},\; \tilde{\phi}(x):=q-\frac{(p-x)(1-\lambda +A_{1}(x))}{1-\lambda +A_{1}(x)-\alpha (p-x)}
\]

\[
\bar{A}(x)=1-\frac{\alpha(x-q)}{1-\lambda +A_{2}(x)}, \; \tilde{A}(x)=\frac{%
1-\lambda +A_{1}(\eta )}{1-\lambda +A_{1}(\eta)-(p-\eta )\alpha }
\]

\[
\phi (x):= \bar{\phi}(x)^{\bar{A}(x)},\quad
\Phi (x):= \tilde{\phi}(x)^{\tilde{A}(x)}
\]

\[
\psi (\varepsilon )=\phi (\varepsilon ^{\theta _{1}}), \quad
\text{ }\Psi (\varepsilon )=\Phi (\varepsilon ^{\theta _{1}}),
\]
for $\theta_{1}>0$.
\end{definition}

\begin{theorem}\label{theo:4.3}

 Let $I^{\alpha}$ be a potential operator and let $M$ be the maximal operator. Assume that $1<p<\infty,$
 $0<\alpha <(1-\lambda)/p,$ $0<\lambda <1,$  $1/p-1/q=\alpha/(1-\lambda).$ Suppose that $\theta _{1}>0$
 and that $\theta _{2}\geqslant\theta
_{1}[1+\alpha q/(1-\lambda)].$ Let $A_{1}$ and $A_{2}$ be continuous
non-negative functions on $(0,p-1]$ and $(0,q-1]$ respectively satisfying
the conditions:
\begin{itemize}
\item[(i)] $A_{2}\in C^{1}((0,\delta ])$ for some positive $\delta >0;$
\item[(ii)] $\lim_{x\rightarrow 0+}A_{2}(x)=0;$
\item[(iii)] $0\leqslant B:=\lim_{x\rightarrow 0+} \frac{\dif }{\dif x}A_2(x)<\frac{%
(1-\lambda )^{2}}{\alpha q^{2}};$
\item[(iv)] $A_{1}(\eta )=A_{2}(\bar{\phi}^{-1}(\eta )),$ where $\bar{\phi}%
^{-1}$ is the inverse of $\bar{\phi}$ on $(0,\delta ]$ for some $\delta >0.$
\end{itemize}

If $b\in \bmo$,  then the operator $M(\com{b}{I^\alpha})$ is bounded from
$\ggMs{p}{\lambda}{\theta_1}{A_1}{X}$ to $\ggMs{q}{\lambda}{\theta_2}{A_2}{X}$.

\end{theorem}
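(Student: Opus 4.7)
The overall approach is to reduce the statement to a classical Morrey-space estimate via the extended reduction lemma \ref{main}, using Lemma \ref{lem:3.2} as the input. The auxiliary functions in Definition \ref{eq:auxiliary_functions} are the bookkeeping device for aligning source and target perturbations throughout the $\varepsilon$-parameter family, in particular handling the fact that the Sobolev relation $1/p-1/q=\alpha/(1-\lambda)$ is not preserved by identical perturbations on both sides.

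For each small $\varepsilon>0$, I would apply Lemma \ref{lem:3.2} with target Morrey parameters $(q-\varepsilon,\lambda-A_2(\varepsilon))$ and source Morrey parameters $(p-\bar{\phi}(\varepsilon),\lambda-A_2(\varepsilon))$. The function $\bar{\phi}$ is designed so that the perturbed Sobolev relation $1/(p-\bar{\phi}(\varepsilon))-1/(q-\varepsilon)=\alpha/(1-\lambda+A_2(\varepsilon))$ holds identically; hypothesis (iv), namely $A_2(\varepsilon)=A_1(\bar{\phi}(\varepsilon))$, then makes the source Morrey norm match a slice of the grand norm $\|\cdot\|_{\ggMs{p}{\lambda}{\theta_1}{A_1}{X}}$. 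Inserting the resulting pointwise-in-$\varepsilon$ bound into the definition of the target grand norm leads to
\[
\|M(\com{b}{I^\alpha}f)\|_{\ggMs{q}{\lambda}{\theta_2}{A_2}{X}}\lesssim \|b\|_{\bmo}\|f\|_{\ggMs{p}{\lambda}{\theta_1}{A_1}{X}}\cdot \sup_{0<\varepsilon<\delta}C_\varepsilon\,\frac{\varepsilon^{\theta_2/(q-\varepsilon)}}{\bar{\phi}(\varepsilon)^{\theta_1/(p-\bar{\phi}(\varepsilon))}},
\]
where $C_\varepsilon$ is the constant \eqref{equ:constante_maximal_comutador} evaluated at the perturbed exponents.

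It remains to bound the displayed supremum. Uniform boundedness of $C_\varepsilon$ as $\varepsilon\to 0$ is immediate since the critical denominator $1-\lambda-\alpha p$ is strictly positive by the hypothesis $\alpha<(1-\lambda)/p$ and since $A_1,A_2\to 0$ at the origin. The power-ratio is the core of the argument: a short computation using (i) and (ii) gives $\bar{\phi}(0)=0$ and
\[
\bar{\phi}'(0)=\frac{(1-\lambda)^2-\alpha q^2 B}{(1-\lambda+\alpha q)^2},
\]
so hypothesis (iii) is precisely the condition that $\bar{\phi}'(0)>0$. Consequently $\bar{\phi}(\varepsilon)\asymp\varepsilon$ near $0$ and the ratio behaves asymptotically like $\varepsilon^{\theta_2/q-\theta_1/p}$; the Sobolev identity yields $q/p=1+\alpha q/(1-\lambda)$, so the hypothesis $\theta_2\geq\theta_1(1+\alpha q/(1-\lambda))$ renders this exponent nonnegative. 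For $\varepsilon$ bounded away from $0$, Lemma \ref{lem:dominance1} reduces the source grand norm to a single Morrey norm, handling the remaining range. The principal obstacle is the delicate asymptotic matching at the origin: it is exactly condition (iii) that makes $\bar{\phi}$ a bi-Lipschitz map near $0$, converting the grand-norm ratio into the clean algebraic Sobolev-type condition on $\theta_1$ and $\theta_2$.
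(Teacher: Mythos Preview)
Your proposal is correct and follows essentially the same approach as the paper: both arguments feed Lemma~\ref{lem:3.2} into the grand-Morrey framework, use the function $\bar{\phi}$ to keep the perturbed Sobolev relation $1/(p-\eta)-1/(q-\varepsilon)=\alpha/(1-\lambda+A_2(\varepsilon))$ intact with $\eta=\bar{\phi}(\varepsilon)$, invoke condition (iii) to get $\bar{\phi}(\varepsilon)\asymp\varepsilon$ near the origin (your explicit computation of $\bar{\phi}'(0)$ is in fact more detailed than the paper's appeal to L'H\^opital), and then check that the $\theta_2$-hypothesis controls the resulting power ratio. The paper handles the range $\sigma<\varepsilon\leqslant s_{\max}$ by a direct H\"older reduction to $\varepsilon=\sigma$ rather than by quoting Lemma~\ref{lem:dominance1}, but this is only a cosmetic difference.
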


\begin{proof}
We note that it is enough to prove the theorem for $\theta _{2}=\theta _{1}(1+\frac{%
\alpha q}{1-\lambda })$ because $\varepsilon ^{\theta _{2}}\leqslant \varepsilon
^{\theta _{1}(1+\frac{\alpha q}{1-\lambda })}$ for $\theta _{2}>\theta
_{1}[1+(\alpha q)/(1-\lambda)]$ and small $\ve$. We also note that, by L'Hospital rule,  $\bar{\phi}(x)\sim x$ as $\ x\rightarrow 0+$ since $B<(1-\lambda )^2/(\alpha q^2)$. Moreover, $\bar{\phi}$ is invertible near $0$, since $\frac{d \bar{\phi}}{dx}(x)>0.$ Under the conditions of Theorem \ref{theo:4.3} the function $A_{1}$ is continuous on $(0,\delta ]$ and $\lim_{x\rightarrow0+}A_{1} (x)=0.$ With all of the previous remarks taken into account, it is enough to prove the boundedness of $M(\com{b}{I^\alpha})$ from $ L_{\theta _{1},A_{1}}^{p),\lambda)}(X,\mu)$
to $ L_{\psi, A_{2}}^{q),\lambda)}(X,\mu)$ since $\phi(x) \sim  x^{1+\frac{\alpha q}{%
1-\lambda }}$, and consequently, $\psi (x)=\phi (x^{\theta _{1}})\sim
x^{\theta _{1}\left(1+\frac{\alpha q}{1-\lambda }\right)}$ as $x\rightarrow 0. $

The case $\sigma<\varepsilon\leqslant s_{\max}$, where $s_{\max}$ is  from \eqref{equ:norm}. Letting
\[
I:=\psi^{\frac{1}{q-\varepsilon}}(\varepsilon)\left(\frac{1}{\mu B(x,r)^{\lambda-A_2(\varepsilon)}} \int_{B(x,r)} |M(\com{b}{I^\alpha} f)(y)|^{q-\varepsilon} \dif \mu(y) \right)^\frac{1}{q-\varepsilon}
\]
we have
\[
\begin{split}
I&\lesssim \psi^{\frac{1}{q-\varepsilon}}(\varepsilon) \mu B(x,r)^\frac{A_2(\varepsilon)+1-\lambda}{q-\varepsilon}\left( \fint_{B(x,r)} | M(\com{b}{I^\alpha} f)(y) |^{q-\sigma} \dif \mu(y) \right)^\frac{1}{q-\sigma}\\
&\lesssim  \psi^{\frac{1}{q-\varepsilon}}(\varepsilon) \mu B(x,r)^\frac{A_2(\sigma)+1-\lambda}{q-\sigma}\left( \fint_{B(x,r)} | M(\com{b}{I^\alpha} f)(y)  |^{q-\sigma} \dif \mu(y) \right)^\frac{1}{q-\sigma}\\
&\lesssim  \left (\sup_{\sigma\leqslant \varepsilon\leqslant s_{\max}}\psi^{\frac{1}{q-\varepsilon}}(\varepsilon)\right ) \psi^\frac{1}{\sigma-q}(\sigma)
\sup_{0<\varepsilon\leqslant \sigma} \sup_{\stackrel{x\in X}{r>0}} \left(\frac{\psi(\varepsilon)}{\mu B(x,r)^{\lambda-A_2(\varepsilon)}} \int_{B(x,r)} | M(\com{b}{I^\alpha} f)(y)  |^{q-\varepsilon} \dif \mu(y) \right)^\frac{1}{q-\varepsilon},
\end{split}
\]
where the first inequality comes from H\"older's inequality and the second one is due to the fact that $A_2$ is bounded on $[\sigma,q-1)$ and $x\mapsto (1-\lambda)/(q-x)$ is an increasing function. Hence, it is enough to consider the case $0<\varepsilon\leqslant \sigma $.\par
The case $0<\varepsilon\leqslant \sigma$. Let $\eta$ and $\varepsilon$ be chosen so that
\begin{equation}\label{eq:etavar}
\frac{1}{p-\eta}-\frac{1}{q-\varepsilon}=\frac{\alpha}{1-\lambda+A_2(\varepsilon)}.
\end{equation}
Obviously we have that $\varepsilon \to 0$ if and only if  $\eta \to 0$ and solving $\eta$ with  respect to $\varepsilon$ in \eqref{eq:etavar} we obtain
\[
\eta=p-\frac{(q-\varepsilon)(1-\lambda+A_2(\varepsilon))}{1-\lambda+A_2(\varepsilon)-\alpha(\varepsilon-q)}= \bar \phi(\varepsilon).
\]
Letting
\[
J:=\psi^{\frac{1}{q-\varepsilon}}(\varepsilon)\left(\frac{1}{\mu B(x,r)^{\lambda-A_2(\varepsilon)}} \int_{B(x,r)} | M(\com{b}{I^\alpha} f)(y) |^{q-\varepsilon} \dif \mu(y) \right)^\frac{1}{q-\varepsilon}
\]
we have
\[
\begin{split}
J&\lesssim C_{p-\eta,q-\ve,\alpha,\lambda-A_2(\ve)} \; \psi^\frac{1}{q-\varepsilon}(\varepsilon) \sup_{\stackrel{x\in X}{r>0}}\left( \frac{1}{\mu B(x,r)^{\lambda-A_2(\varepsilon)}} \int_{B(x,r)} |f(y)|^{p-\eta} \dif \mu(y) \right)^\frac{1}{p-\eta} \\
&\lesssim  C_{p-\eta,q-\ve,\alpha,\lambda-A_2(\ve)} \; \eta^\frac{\theta_1}{\eta-p}\psi^\frac{1}{q-\varepsilon}(\varepsilon) \sup_{\stackrel{x\in X}{r>0}}\left( \frac{\eta^{\theta_1}}{\mu B(x,r)^{\lambda-A_2(\varepsilon)}} \int_{B(x,r)} |f(y)|^{p-\eta} \dif \mu(y) \right)^\frac{1}{p-\eta} \\
&\lesssim \|f\|_{ L^{p),\lambda)}_{\theta_1,A_1}(X,\mu)},
\end{split}
\]
where the first inequality is due to Lemma \ref{lem:3.2} and the constant $C_{p-\eta,q-\varepsilon,\alpha,\lambda-A_2(\varepsilon)}$ is the one from \eqref{equ:constante_maximal_comutador}. The last inequality is due to the fact that $\eta=\bar \phi(\varepsilon)$. Since the constant in the last inequality is uniformly bounded with respect  to  $\varepsilon$  we obtain the desired boundedness of the operator.
\end{proof}


\begin{corollary}
Let the conditions of Theorem \ref{theo:4.3} be fullfiled.
Then the commutator $[b,\ I_{\alpha}]$  is bounded from
$\ggMs{p}{\lambda}{\theta_1}{A_1}{X}$   to  $\ggMs{q}{\lambda}{\theta_2}{A_2}{X}$.
\end{corollary}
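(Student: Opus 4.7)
The plan is to deduce this corollary directly from Theorem~\ref{theo:4.3} via the pointwise domination of a locally integrable function by its Hardy--Littlewood maximal function. Since \triplet is an SHT, the Lebesgue differentiation theorem is available, so for any locally integrable function $g$ one has $|g(x)| \leqslant Mg(x)$ for $\mu$-almost every $x \in X$. The function $g = \com{b}{I^\alpha}f$ is locally integrable: $b \in \bmo$ is locally integrable by definition, and the potential $I^\alpha f$ is meaningful for $f$ in the spaces under consideration because the classical Morrey boundedness of $I^\alpha$ is available at each integrability level $p-\eta$ appearing inside the functional $\Phi^{p,\lambda}_{\varphi,A_1}$.

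Applying the pointwise bound yields
\[
|\com{b}{I^\alpha}f(x)| \leqslant M(\com{b}{I^\alpha}f)(x) \qquad \text{for $\mu$-a.e.\ } x\in X.
\]
The generalized grand Morrey norm is monotone with respect to a.e.\ pointwise domination, since this monotonicity trivially propagates from each building-block Morrey norm $\Ms{q-\ve}{\lambda-A_2(\ve)}$ entering the supremum defining $\Phi^{q,\lambda}_{\psi,A_2}(\cdot,s_{\max})$. Consequently,
\[
\norm{\com{b}{I^\alpha}f}_{\ggMs{q}{\lambda}{\theta_2}{A_2}{X}} \leqslant \norm{M(\com{b}{I^\alpha}f)}_{\ggMs{q}{\lambda}{\theta_2}{A_2}{X}}.
\]

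The right-hand side is dominated by $C\norm{f}_{\ggMs{p}{\lambda}{\theta_1}{A_1}{X}}$ by a direct invocation of Theorem~\ref{theo:4.3}, and the corollary follows. There is no real obstacle here: the entire content of the corollary has already been packed into Theorem~\ref{theo:4.3}, which was formulated with the stronger conclusion $\norm{M(\com{b}{I^\alpha}f)}_{\ggMs{q}{\lambda}{\theta_2}{A_2}{X}} \lesssim \norm{f}_{\ggMs{p}{\lambda}{\theta_1}{A_1}{X}}$ precisely so that this kind of corollary would be immediate. The only conceptual point worth flagging is the implicit use of Lebesgue differentiation in the homogeneous-space setting, which is justified by the doubling assumption on $\mu$.
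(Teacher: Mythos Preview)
Your proof is correct and follows exactly the same route as the paper: one uses the pointwise inequality $|\com{b}{I^\alpha}f(x)|\leqslant M(\com{b}{I^\alpha}f)(x)$ to pass from the commutator to its maximal function in the target norm, and then invokes Theorem~\ref{theo:4.3}. The paper's proof is simply the one-line version of what you wrote; your added remarks on Lebesgue differentiation in the SHT setting and on the monotonicity of the generalized grand Morrey norm merely make explicit what the paper leaves tacit.
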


\begin{proof}
The result follows by the previous theorem and by the inequality

\[\norm{\com{b}{I^{\alpha}}f}_{\ggMs{q}{\lambda}{\theta_2}{A_2}{X}} \leqslant \norm{M(\com{b}{I^{\alpha}}f)}_{\ggMs{q}{\lambda}{\theta_2}{A_2}{X}}.\qedhere \]
\end{proof}

\section{Interior estimate of elliptic equations}

In this section we apply the main result of this paper to
establish some interior estimates of solutions to nondivergence
elliptic equations with $VMO$ coefficients. (see also the paper
\cite{liu} for related topics). Suppose $n \geq 3$ and $\Omega$ is
an open set in $\mathbb R^{n}$. Let
\[
Lu=\sum_{i,j=1}^{n}a_{ij}(x)(\partial^{2}/\partial x_{i}\partial x_{j}),
\]
where $a_{ij}=a_{ji}$ for $i,\ j=1,2,\ \cdots,\ n$, a.e. in $\Omega$; assume that there exists $C>0$ such that, for $y=(y_{1},\ \cdots,\ y_{n})\in \mathbb R^{n}$,
\begin{center}
$C^{-1}|y|^{2}\displaystyle \leqslant\sum_{i,j=1}^{n}a_{ij}(x)y_{i}y_{j}\leqslant C|y|^{2}$, for a.e. $ x\in\Omega$;
\end{center}
denote by $(A_{ij})_{n\times n}$ the inverse of the matrix $(a_{ij})_{n\times n}$. For $ x\in\Omega$ and $y\in \mathbb R^{n}$, let
\[
K(x,\ y)=\frac{1}{\left[(n-2)C_{n}\sqrt{\det(a_{ij}(x))}\right]}\left(\sum_{i,j=1}^{n}A_{ij}(x)y_{i}y_{j}\right)^{1-n/2}
\]
and $K_{i}(x,\displaystyle \ y)=\frac{\partial}{\partial y_{i}}K(x,\ y),\ K_{ij}(x,\displaystyle \ y)=\frac{\partial^{2}}{\partial x_{i}\partial x_{j}}K(x,\ y)$.

We denote by $VMO(\Omega)$ the class of all locally integrable functions with vanishing mean oscillation introduced
in \cite{Sar}. From \cite{[2],[6]}, we obtain the \textit{interior representation formula}, that is, if $a_{ij}\in VMO\cap L^{\infty}(\Omega)$ and $u\in W_{0}^{2,r}(\Omega)$, $1<r<\infty$ (see \cite{[2]}, \cite{[2']},\cite{[8]}),
\begin{multline*}
u_{x_{i}x_{j}}(x)=\mathrm{P}.\mathrm{V}.\ \int_{B}K_{ij}(x, x-y)\left[\sum_{k,l=1}^{n}(a_{kl}(x)-a_{kl}(y))u_{x_{k}x_{l}}(y)+Lu(y)\right]\dif y \\
+Lu(x)\displaystyle \int_{|y|=1}K_{i}(x, y)y_{j}\dif \delta_{y},
\end{multline*}
a.e. for $x\in B\subset\Omega$, where $B$ is a ball in $\Omega$. We also set
\[
M:= \max_{i,j=1,\ldots,n} \max_{|\alpha|\leqslant 2n} \norm{\partial^{\alpha}K_{ij}(x,y)/\partial y^{\alpha}}_{L^{\infty}}.
\]
To prove the next statement we need local version of Theorem 4.3 (see also Theorem 2.4 in \cite{[2]} or Theorem 2.13 in \cite{[2']}).
\vskip+0.2cm

{\bf Corollary A.} {\em Let $1<p<\infty$ and let $\Omega$ be a bounded domain in ${\Bbb{R}}^n$. Suppose that $a\in VMO\cap L^{\infty}$. Assume that $T$ is the Calder\'on--Zygmund operator defined  on $\Omega$ and that $\eta$ is the $VMO$ modulus of  $a$.  Then for any $\varepsilon>0$, there exists a positive number $\rho= \rho( \varepsilon, \eta)$ such that for any balls $B_r$ with the conditions: $\Omega_r:=B_r\cap \Omega\neq \emptyset$,  $r\in (0, \rho)$ and all $f\in L^{p), \lambda}_{\theta, A}(\Omega_r)$ the inequality
$$ \| [a, T]f\|_{L^{p), \lambda}_{\theta, A}(\Omega_r)} \leq C\varepsilon \|f\|_{L^{p), \lambda}_{\theta, A}(\Omega_r)}$$
is fulfilled.}

\begin{theorem}
Let $\Omega$ be a bounded domain in $\mathbb R^{n}$. Suppose that $1<p,r<\infty$. Let  $ a_{ij}\in  \vmo{\Omega}\cap L^{\infty}$, $i,\ j=1,2,\ \cdots,\ n$. Suppose that $\eta_{i,j}$ is the $VMO$ modulus of  $a_{ij}$; we set
$\eta=\Big( \sum_{i,j=1}^n \eta_{i,j}\Big)^{1/2}$. Suppose also that $M<\infty$. 
Then  there is a positive constant $\rho=\rho(n, r, p,\lambda, M, \theta, A, \eta)$ such that for all balls $ B\subset\Omega$ with radius smaller than $\rho$, and $u$ satisfying the conditions $u\in W_{0}^{2,r}(\Omega)$, $\|Lu\|_{L^{p), \lambda}_{\theta, A}}(B)<\infty$  we have that $u_{x_{i}x_{j}}\in L^{p), \lambda}_{\theta,A}(B)$ and, moreover, there exists a positive constant $C=C(n,p,\lambda, \theta, M, A,\eta)$ such that
\[
\norm{u_{x_{i}x_{j}}}_{L^{p), \lambda}_{\theta,A}(B)}\leqslant C\norm{ Lu}_{L^{p), \lambda}_{\theta, A}(B)}.
\]
\end{theorem}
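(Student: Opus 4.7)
The plan is to exploit the interior representation formula displayed just before the statement, which expresses $u_{x_ix_j}(x)$ as a sum of three contributions: a Calder\'on--Zygmund type commutator $\sum_{k,l}\com{a_{kl}}{T_{ij}}(u_{x_kx_l})(x)$, where $T_{ij}$ denotes the singular integral with kernel $K_{ij}(x,x-y)$; a Calder\'on--Zygmund term $T_{ij}(Lu)(x)$; and a residual term $Lu(x)\int_{|y|=1}K_i(x,y)y_j\dif\delta_y$, which is majorized by $C|Lu(x)|$ thanks to the hypothesis $M<\infty$. Writing this out gives the pointwise bound
\[
|u_{x_ix_j}(x)|\leqslant \sum_{k,l}|\com{a_{kl}}{T_{ij}}(u_{x_kx_l})(x)|+|T_{ij}(Lu)(x)|+C|Lu(x)|.
\]

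I would then take the $L^{p),\lambda)}_{\theta,A}(B)$-norm of both sides and estimate the three contributions separately. Theorem~\ref{theo:CZggMs} (boundedness of $T_{ij}$ on generalized grand Morrey spaces) controls the second term by $C\norm{Lu}_{L^{p),\lambda)}_{\theta,A}(B)}$, and the third is immediate. For the commutator piece the crucial input is Corollary~A: since each $a_{kl}$ belongs to $\vmo{\Omega}$, for every $\varepsilon>0$ there exists $\rho=\rho(\varepsilon,\eta)$ such that on any ball $B\subset\Omega$ of radius less than $\rho$ one has
\[
\norm{\com{a_{kl}}{T_{ij}}f}_{L^{p),\lambda)}_{\theta,A}(B)}\leqslant C\varepsilon\norm{f}_{L^{p),\lambda)}_{\theta,A}(B)}.
\]

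Summing over $i,j\in\{1,\ldots,n\}$ and collecting the commutator contributions produces an inequality of the form
\[
\sum_{i,j}\norm{u_{x_ix_j}}_{L^{p),\lambda)}_{\theta,A}(B)}\leqslant C_{0}n^{2}\varepsilon \sum_{k,l}\norm{u_{x_kx_l}}_{L^{p),\lambda)}_{\theta,A}(B)}+C\norm{Lu}_{L^{p),\lambda)}_{\theta,A}(B)}.
\]
Choosing $\varepsilon$ so that $C_{0}n^{2}\varepsilon\leqslant 1/2$ (which in turn fixes $\rho$ through Corollary~A) allows the commutator term to be absorbed into the left-hand side, yielding the asserted estimate.

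The main obstacle is the classical absorption difficulty: the argument is valid only once $\sum_{i,j}\norm{u_{x_ix_j}}_{L^{p),\lambda)}_{\theta,A}(B)}$ is known to be finite a priori, whereas the hypothesis $u\in W^{2,r}_{0}(\Omega)$ only delivers $L^{r}$ integrability of the second derivatives. I would resolve this through the standard truncation/approximation procedure used in the VMO elliptic regularity literature: replace $u_{x_ix_j}$ by its truncation at height $N$, apply the representation formula argument to the truncated object (for which all quantities are finite in the grand Morrey norm), and pass to the limit $N\to\infty$ via monotone convergence. An essentially equivalent route mollifies $u$, derives the estimate for the smooth approximants, and then invokes continuity of $T_{ij}$ and of the commutators in the grand Morrey norm to conclude.
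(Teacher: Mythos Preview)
Your argument is correct and follows essentially the same route as the paper: apply the interior representation formula, estimate the Calder\'on--Zygmund piece via Theorem~\ref{theo:CZggMs}, the commutator pieces via Corollary~A with a small~$\varepsilon$, and absorb. The paper's own proof is considerably terser---it jumps directly to the inequality $\norm{u_{x_ix_j}}\leqslant C\varepsilon\norm{u_{x_ix_j}}+C\norm{Lu}$ and absorbs without discussing the a~priori finiteness of $\norm{u_{x_ix_j}}_{L^{p),\lambda)}_{\theta,A}(B)}$; your explicit acknowledgment of that issue and the truncation/approximation remedy is more careful than what the paper provides.
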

\begin{proof}
It is easy to verify that $K_{ij}$ satisfies the condition in Corollary by the representation of $u_{x_{i}x_{j}}$ and the conditions of $K_{ij}$. Thus, from Corollary A, we deduce, for any $\varepsilon >0$,

\[
\norm{u_{x_{i}x_{j}}}_{ L^{ p), \lambda }_{\theta, A}(B)} \leqslant C \varepsilon \norm{u_{x_{i}x_{j}}}_{ L^{p), \lambda}_{\theta, A}(B)}+C\norm{Lu}_{L^{p), \lambda}_{\theta, A}(B)}.
\]
Choosing $\varepsilon$ to be small enough $(\mathrm{e}.\mathrm{g}.\ \varepsilon <1)$, we then obtain
\[
\norm{u_{x_{i}x_{j}}}_{L^{p), \lambda}_{\theta,A}(B)}\leq(C/(1- C\varepsilon))\norm{ Lu}_{L^{p), \lambda}_{\theta,A}(B)}.
\]
This finishes the proof.
\end{proof}

\section*{Acknowledgment}

 The first and second authors were  partially supported by the Shota Rustaveli National Science Foundation Grant (Project No. GNSF/ST09\_ 23\_ 3-100). The third author was partially supported by \textsl{Funda\c c\~ao para a Ci\^encia e a Tecnologia} (FCT), \textsf{Grant SFRH/BPD/63085/2009}, Portugal and by Pontificia Universidad Javeriana.








\end{document}